\numberwithin{equation}{section}
\numberwithin{figure}{section}
\numberwithin{table}{section}
\theoremstyle{plain}
\newtheorem{thm}{\protect\theoremname}[section]
\theoremstyle{definition}
\newtheorem{defn}[thm]{\protect\definitionname}
\theoremstyle{plain}
\newtheorem{lem}[thm]{\protect\lemmaname}
\providecommand{\definitionname}{Definition}
\providecommand{\lemmaname}{Lemma}
\providecommand{\theoremname}{Theorem}
\begin{document}
\title{Essential CR-sets near zero}
\author{Sujan Pal}
\address{Department of Mathematics, University of Kalyani, Kalyani, Nadia-741235,
West Bengal, India}
\email{\emph{sujan2016pal@gmail.com}}
\keywords{Stone-\v{C}ech compactification of discrete semigroup, Combinatorially
Rich sets, Ultrafilters near zero.}
\subjclass[2000]{Primary: 05D10; Secondary: 22A15.}
\begin{abstract}
Combinatorially Rich sets were introduced by Bergelson and Glasscock
for commutative semigroup. Latter Hindman, Hosseini, Strauss and Tootkaboni
extended the definition of Combinatorially Rich sets for arbitrary
semigroup. Recently Goswami proved that product of two Combinatorially
Rich sets is also a Combinatorially Rich set. On the other hand Hindman
and Leader were the first to introduce the concept of central sets
near zero for dense semigroups of $\left(\left(0,\infty\right),+\right)$
and demonstrated an important combinatorial consequence regarding
these sets. In this article we provided dynamical and combinatorial
characterization of essential CR-sets near zero and explore the cartesian
product of these sets.
\end{abstract}

\maketitle

\section{Introduction}

Hindman's theorem and Van der Waerden theorems are classical Ramsey
theoretic results, which deals with finite configurations and the
second one deals with infinite configurations.
\begin{thm}
\cite[Theorem 3.1]{key-11-1} (Hindman's theorem) Given any finite
coloring of the positive integers, there exists an infinite monochromatic
set $A$ such that the set $\left\{ \sum_{a\in F}a:F\in\mathcal{P}_{f}\left(A\right)\right\} $
is monochromatic.
\end{thm}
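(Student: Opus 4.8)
The plan is to give the Galvin--Glazer ultrafilter proof, since it fits the machinery that pervades the present paper. First I would pass to the Stone--\v{C}ech compactification $\beta\mathbb{N}$ of the discrete semigroup $\left(\mathbb{N},+\right)$, whose points are the ultrafilters on $\mathbb{N}$ and whose operation, extending $+$, makes $\left(\beta\mathbb{N},+\right)$ a compact right topological semigroup in which a set $A\subseteq\mathbb{N}$ belongs to $p+q$ precisely when $\{x\in\mathbb{N}:-x+A\in q\}\in p$, where $-x+A=\{y\in\mathbb{N}:x+y\in A\}$. By the Ellis--Numakura lemma this semigroup contains an idempotent, so I fix $p\in\beta\mathbb{N}$ with $p+p=p$; note that $p$ is nonprincipal, since $n+n\neq n$ for every $n\in\mathbb{N}$, hence every member of $p$ is infinite.

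Given the finite coloring $\mathbb{N}=\bigcup_{i=1}^{r}C_{i}$, exactly one colour class, say $C_{i_{0}}$, lies in the ultrafilter $p$. The crucial step is the standard observation that, with $C:=C_{i_{0}}$ and $C^{\star}:=\{x\in C:-x+C\in p\}$, one has $C^{\star}\in p$ and, moreover, $-x+C^{\star}\in p$ for every $x\in C^{\star}$; this is obtained by unwinding the membership formula for $+$ on $\beta\mathbb{N}$ together with $p=p+p$ (one uses that $-x+C\in p$ and that $-(x+y)+C=-y+(-x+C)$, so that the relevant shifted sets again belong to $p$).

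Next I would construct a strictly increasing sequence $\left(x_{n}\right)_{n\geq1}$ in $\mathbb{N}$ by recursion so that $\sum_{k\in F}x_{k}\in C^{\star}$ for every nonempty $F$ contained in the set of indices already chosen. Pick $x_{1}\in C^{\star}$; having chosen $x_{1}<\cdots<x_{n}$ with the stated property, the set $B_{n}:=C^{\star}\cap\bigcap\left\{ -\left(\sum_{k\in F}x_{k}\right)+C^{\star}:\emptyset\neq F\subseteq\{1,\dots,n\}\right\}$ is a finite intersection of members of $p$, hence lies in $p$ and is infinite, so I may choose $x_{n+1}\in B_{n}$ with $x_{n+1}>x_{n}$; a short case analysis ($n+1\notin F$, $F=\{n+1\}$, or $F=G\cup\{n+1\}$ with $\emptyset\neq G\subseteq\{1,\dots,n\}$) shows the property is preserved. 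Setting $A:=\{x_{n}:n\geq1\}$ then yields an infinite set with $A\subseteq C^{\star}\subseteq C=C_{i_{0}}$, and every element of $\{\sum_{a\in F}a:F\in\mathcal{P}_{f}(A)\}$ has the form $\sum_{k\in G}x_{k}\in C^{\star}\subseteq C_{i_{0}}$; hence both $A$ and its set of finite sums are monochromatic.

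The recursion itself is routine; the real work lies in the preliminaries --- equipping $\beta\mathbb{N}$ with its semigroup structure, checking that right translations are continuous, deducing the existence of an idempotent from the Ellis--Numakura lemma, and verifying the ``shift'' identities on which the good behaviour of $C^{\star}$ rests. A minor but essential point is to force the $x_{n}$ to be strictly increasing (possible because members of the nonprincipal ultrafilter $p$ are infinite), so that the monochromatic set $A$ produced is genuinely infinite rather than merely nonempty.
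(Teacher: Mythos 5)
Your argument is correct: it is the classical Galvin--Glazer ultrafilter proof, and each step checks out --- the idempotent $p\in\beta\mathbb{N}$ exists by Ellis--Numakura, exactly one colour class $C$ lies in $p$, the set $C^{\star}=\{x\in C:-x+C\in p\}$ belongs to $p$, and the identity $-(x+y)+C=-y+(-x+C)$ together with $p=p+p$ gives $-x+C^{\star}\in p$ for $x\in C^{\star}$, which is precisely what makes the recursive choice of $x_{n+1}$ from the finite intersection $B_{n}\in p$ work; your remark that $p$ is nonprincipal (so members of $p$ are infinite and the $x_{n}$ can be taken strictly increasing) correctly secures infiniteness of $A$, and the case analysis for sums involving $x_{n+1}$ is the standard one. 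Note, however, that the paper itself offers no proof of this statement: it is quoted as Theorem 3.1 of Hindman's 1974 paper, whose original argument is a purely combinatorial (and considerably longer) induction with no ultrafilters at all. So your route is genuinely different from the cited source, though not from the spirit of the present paper: it trades Hindman's elementary but intricate combinatorics for the algebraic structure of $\left(\beta\mathbb{N},+\right)$, which is exactly the machinery (compact right topological semigroups, idempotents, the membership formula for $p+q$) that the paper sets up in its introduction and uses throughout for $0^{+}\left(S\right)$ and $CR^{0}\left(S\right)$. What the ultrafilter approach buys is brevity and the fact that the same idempotent argument generalizes verbatim to arbitrary semigroups (FP instead of FS), which is the setting the paper cares about; what the original combinatorial proof buys is independence from the axiom-of-choice-heavy construction of $\beta\mathbb{N}$.
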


A set $A\subseteq\mathbb{N}$ is called IP-set if there exists a sequence
$\left\langle x_{n}\right\rangle _{n=1}^{\infty}$ such that $\text{FS}(x_{n})=\left\{ \sum_{t\in\alpha}x_{t}:\alpha\in\mathcal{P}_{f}(\mathbb{N})\right\} \subseteq A$.
(This definitions make perfect sense in any semigroup (S, ·) and we
use it in this context. FS is an abbriviation of finite sums and will
be replaced by FP if we use multiplicative notation for the semigroup
operation.) Consequently, Hindman's theorem say that if we finitely
color the positive integers, then one cell must be an IP-set\@. On
the other hand, Van der Waerden\textquoteright s Theorem \cite{key-16}
states that for any partition of the positive integers $\mathbb{N}$
one of the cells of the partition contains arbitrarily long arithmetic
progressions.

Using dynamical system Furstenberg gives a simulteneous extention
of both theorems, known as Central Sets Theorem \cite{key-10}. Before
further procceding, let us introduce some algebraic preliminaries
on Stone-\v{C}ech compactification.

There are many notions of largeness in a semigroup $\left(S,+\right)$
that are related to the algebraic structure of the Stone-\v{C}ech
compactification of the discrete set $S$. In our work we will use
the structure of Stone-\v{C}ech compactification of discrete semigroup.
Let $\left(S,\cdot\right)$ be a discrete semigroup and $\beta S$
be the Stone-\v{C}ech compactification of the discrete semigroup
$S$ and on $\beta S$ is the extension of $'\cdot'$ on $S$. The
points of $\beta S$ are ultrafilters and principal ultrafilters are
identified by the points of $S$. The extension is unique extension
for which $\left(\beta S,\cdot\right)$ is compact, right topological
semigroup with $S$ contained in its topological center. That is,
for all $p\in\beta S$ the function $\rho_{p}:\beta S\to\beta S$
is continuous, where $\rho_{p}(q)=q\cdot p$ and for all $x\in S$,
the function $\lambda_{x}:\beta S\to\beta S$ is continuous, where
$\lambda_{x}(q)=x\cdot q$. For $p,q\in\beta S$, $p\cdot q=\left\{ A\subseteq S:\left\{ x\in S:x^{-1}A\in q\right\} \in p\right\} $,
where $x^{-1}A=\left\{ y\in S:x\cdot y\in A\right\} $.

There is a famous theorem due to Ellis \cite[Corollary 2.39]{key-13}
that if $S$ is a compact right topological semigroup then the set
of idempotents $E\left(S\right)\neq\emptyset$. A non-empty subset
$I$ of a semigroup $T$ is called a left ideal of $S$ if $TI\subseteq I$,
a right ideal if $IT\subseteq I$, and a two sided ideal (or simply
an ideal) if it is both a left and right ideal. A minimal left ideal
is the left ideal that does not contain any proper left ideal. Similarly,
we can define minimal right ideal. Any compact Hausdorff right topological
semigroup $T$ has the smallest two sided ideal, $\begin{array}{ccc}
K(T) & = & \bigcup\left\{ L:L\text{ is a minimal left ideal of }T\right\} \\
 & = & \bigcup\left\{ R:R\text{ is a minimal right ideal of }T\right\} .
\end{array}$ 

Given a minimal left ideal $L$ and a minimal right ideal $R$, $L\cap R$
is a group, and in particular contains an idempotent. If $p$ and
$q$ are idempotents in $T$ we write $p\leq q$ if and only if $pq=qp=p$.
An idempotent is minimal with respect to this relation if and only
if it is a member of the smallest ideal $K(T)$ of $T$. See \cite{key-13}
for an elementary introduction to the algebra of $\beta S$ and for
any unfamiliar details. Central sets are the elements from the idempotent
in $K\left(\beta S\right)$.

In the present work we consider only those subsemigroups $S$ which
are dense in $\left(\left(0,\infty\right),+\right)$. In this case
one can define 
\[
0^{+}\left(S\right)=\left\{ p\in\beta S_{d}:\text{ for all }\left(\delta>0\right)\left(\left(0,\delta\right)\in p\right)\right\} 
\]
 ($S_{d}$ is the set $S$ with the discrete topology).

It was proved in \cite[Lemma 2.5]{key-12-1} that $0^{+}\left(S\right)$
is a compact right topological subsemigroup of $\left(\beta S,+\right)$.
It was also noted there in $0^{+}\left(S\right)$ is disjoint from
$K\left(\beta S\right)$ and hence gives some new information which
is not available from $K\left(\beta S\right)$. Being a compact right
topological semigroup , $0^{+}\left(S\right)$ has a minimal ideal
$K\left(0^{+}\left(S\right)\right)$. Central sets near zero are the
sets belong to the idempotent of $K\left(0^{+}\left(S\right)\right)$.

In this article we provide both dynamical and ellementary characterizations
of essential CR-set near zero in section 3. In section 4 we prove
that product of two CR-sets near zero is also a CR-set near zero.
We also prove the same result hold for essential CR-sets near zero.

\section{The notion of combinatorially rich sets near zero}

Bergelson and Glasscock introduced the notion of Combinatorially Rich
sets for commutative semigroups in \cite{key-16}. In \cite{key-12}
authors extended the notion of combinatorially rich sets to arbitrary
semigroups and investigated its relationship with other notion of
largeness.

We write $A^{B}$ for the set of functions from $B$ to $A$. In particular,
$S^{\mathbb{N}}$ is the set of sequences in $S$. And we write $\mathcal{P}_{f}\left(X\right)$
for the set of finite nonempty subsets of $X$, $\mid X\mid$ is denoted
the cardinality of the set $X$. The following is where we revisit
the concepts of J-sets and CR-sets.
\begin{defn}
\cite[Definition 2.1 and 2.2]{key-12} Let $\left(S,+\right)$ be
a commutative semigroup and let $A\subseteq S$. 
\end{defn}

\begin{enumerate}
\item $A$ is said to be a J-set if and only if whenever $F\in\mathcal{P}_{f}\left(S^{\mathbb{N}}\right)$,
there exist $a\in S$ and $H\in\mathcal{P}_{f}\left(\mathbb{N}\right)$
such that for each $f\in F$, 
\[
a+\sum_{t\in H}f\left(t\right)\in A.
\]
\item $A$ is said to be a CR-set if for each $k\in\mathbb{N}$ there exists
$r\in\mathbb{N}$ and whenever $F\in\mathcal{P}_{f}\left(S^{\mathbb{N}}\right)$
with $\mid F\mid\leq k$ , there exist $a\in S$ and $H\in\mathcal{P}_{f}\left(\mathbb{N}\right)$
with $\max H\leq r$ such that for each $f\in F$, 
\[
a+\sum_{t\in H}f\left(t\right)\in A.
\]
\item $A$ is said to be a k-CR-set if there exists $r\in\mathbb{N}$ and
whenever $F\in\mathcal{P}_{f}\left(S^{\mathbb{N}}\right)$ with $\mid F\mid\leq k$
, there exist $a\in S$ and $H\in\mathcal{P}_{f}\left(\mathbb{N}\right)$
with $\max H\leq r$ such that for each $f\in F$, 
\[
a+\sum_{t\in H}f\left(t\right)\in A.
\]
\end{enumerate}
One can easily observe that CR-sets are J-sets. 
\begin{defn}
Let $\left(S,+\right)$ be a commutative semigroup and let $A\subseteq S$. 
\begin{enumerate}
\item $J\left(S\right)=\left\{ p\in\beta S:\text{ for all }A\in p,A\text{ is a J-set}\right\} $.
\item $CR\left(S\right)=\left\{ p\in\beta S:\text{ for all }A\in p,A\text{ is a CR-set}\right\} $.
\item For $k\in\mathbb{N}$, $k-CR\left(S\right)=\left\{ p\in\beta S:\text{ for all }A\in p,A\text{ is a k-CR-set}\right\} $.
\end{enumerate}
\end{defn}

In the paper \cite[Def 4.3]{key-15} Patra stated the notion of J-sets
near zero. In the following, we write $\mathcal{T}_{0}$ for the set
of all sequences in $S$ which converge to zero under usual topology.
\begin{defn}
Let $\left(S,+\right)$ be a dense subsemigroup of $\left(\left(0,\infty\right),+\right)$
and $A\subseteq S$. Then 
\end{defn}

\begin{enumerate}
\item $A$ is said to be J-set near zero if and only if whenever $F\in P_{f}\left(\mathcal{T}_{0}\right)$
and $\delta>0$, there exist $a\in S\cap\left(0,\delta\right)$ and
$H\in P_{f}\left(\mathbb{N}\right)$ such that for each $f\in F$,
\[
a+\sum_{t\in H}f\left(t\right)\in A.
\]
\item $J^{0}\left(S\right)=\left\{ p\in0^{+}\left(S\right):\forall A\in p,A\text{ is J-set near zero}\right\} $.
\end{enumerate}
We define the CR-sets near zero and Essential CR-sets near zero in
the following.
\begin{defn}
Let $\left(S,+\right)$ be a dense subsemigroup of $\left(\left(0,\infty\right),+\right)$
and $A\subseteq S$.
\end{defn}

\begin{enumerate}
\item Then $A$ is a CR-set near zero if for each $k\in\mathbb{N}$ and
$\delta>0$ there exists $r\in\mathbb{N}$ whenever $F\in P_{f}\left(\mathcal{T}_{0}\right)$
with $\mid F\mid\leq k$ , there exist $a\in S\cap\left(0,\delta\right)$
and $H\subseteq\left\{ 1,2,\ldots,r\right\} $ such that for all $f\in F$,
\[
a+\sum_{t\in H}f\left(t\right)\in A.
\]
\item Then $A$ is a k-CR-set near zero if for each $\delta>0$ there exists
$r\in\mathbb{N}$ whenever $F\in P_{f}\left(\mathcal{T}_{0}\right)$
with $\mid F\mid\leq k$ , there exist $a\in S\cap\left(0,\delta\right)$
and $H\subseteq\left\{ 1,2,\ldots,r\right\} $ such that for all $f\in F$
\[
a+\sum_{t\in H}f\left(t\right)\in A.
\]
\end{enumerate}
It can easily be seen that CR-sets near zero are J-sets near zero.
In the following lemma, we show that the CR-sets near zero is partition
regular.
\begin{lem}
\label{PRCR near 0} Let $S$ be a dense subsemigroup of $\left(\left(0,\infty\right),+\right)$
and $A_{1},A_{2}\subseteq S$. If $A_{1}\cup A_{2}$ is a CR-set near
zero, then either $A_{1}$ or $A_{2}$ is a CR-set near zero.
\end{lem}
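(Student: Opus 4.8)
The plan is to prove the contrapositive: assume $A_{1}\cup A_{2}$ is a CR-set near zero but that neither $A_{1}$ nor $A_{2}$ is, and derive a contradiction with the CR-property of $A_{1}\cup A_{2}$. Unwinding the negations, there are $k_{1}\in\mathbb{N}$ and $\delta_{1}>0$ so that for every $r\in\mathbb{N}$ there is a \emph{failure family} $F_{1}^{(r)}\in\mathcal{P}_{f}(\mathcal{T}_{0})$ with $|F_{1}^{(r)}|\leq k_{1}$ such that for every $a\in S\cap(0,\delta_{1})$ and every $H\subseteq\{1,\dots,r\}$ some $f\in F_{1}^{(r)}$ satisfies $a+\sum_{t\in H}f(t)\notin A_{1}$; symmetrically one gets $k_{2},\delta_{2}$ and families $F_{2}^{(r)}$ of size $\leq k_{2}$ for $A_{2}$. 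Since $(0,\delta)\subseteq(0,\delta')$ whenever $\delta\leq\delta'$, the thresholds may be replaced by anything smaller; put $\delta:=\frac{1}{2}\min\{\delta_{1},\delta_{2}\}$. The aim is to fix an integer $k$ depending only on $k_{1}$ and $k_{2}$, feed $k$ and $\delta$ into the CR-property of $A_{1}\cup A_{2}$ to obtain an $r$, and then exhibit $F\in\mathcal{P}_{f}(\mathcal{T}_{0})$ with $|F|\leq k$ for which no pair $(a,H)$ with $a\in S\cap(0,\delta)$ and $H\subseteq\{1,\dots,r\}$ has $a+\sum_{t\in H}f(t)\in A_{1}\cup A_{2}$ simultaneously for all $f\in F$; this contradicts the choice of $r$ and proves the lemma.

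The construction of $F$ is the substantial part, and I would model it on the proof that CR-sets are partition regular in an arbitrary semigroup \cite{key-12}. Write $F_{1}^{(r)}=\{f_{1},\dots,f_{m}\}$ and $F_{2}^{(r)}=\{g_{1},\dots,g_{n}\}$ with $m\leq k_{1}$, $n\leq k_{2}$. One first normalizes, using that members of $\mathcal{T}_{0}$ tend to $0$: by taking failure families at a large scale and shifting them by a long initial segment (which does not disturb the failure property) one may assume that all the $f_{i}$ and $g_{j}$ have values below a prescribed $\varepsilon$, chosen small enough that every sum $\sum_{t\in H}f_{i}(t)$ or $\sum_{t\in H}g_{j}(t)$ with $H\subseteq\{1,\dots,r\}$ is less than $\delta$; this keeps every intermediate translate that will appear inside $(0,\min\{\delta_{1},\delta_{2}\})$, so the failure families remain applicable to it. Then $F$ is built from the pointwise sums of short alternating words $w_{1}+\cdots+w_{p}$ with $w_{2s-1}\in F_{1}^{(r)}$, $w_{2s}\in F_{2}^{(r)}$ and $p$ bounded by a number $L=L(k_{1},k_{2})$ --- equivalently, suitable interleavings of the two families --- so that $|F|$ is bounded in terms of $k_{1},k_{2}$ alone, which is how $k$ is chosen. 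Given the pair $(a,H)$ returned by the CR-property of $A_{1}\cup A_{2}$, one runs an escape process: apply the failure family for $A_{1}$ to push $a$ out of $A_{1}$ (the resulting small element must then lie in $A_{2}$, otherwise the corresponding word already witnesses the failure of $F$); apply the failure family for $A_{2}$ to push it out of $A_{2}$, hence into $A_{1}$; and iterate, alternating. Because each step is a choice among the finitely many sequences of $F_{1}^{(r)}$ and $F_{2}^{(r)}$, a pigeonhole along the $\leq L$ steps should eventually supply a word of $F$ whose $H$-sum added to $a$ lies outside both $A_{1}$ and $A_{2}$, contradicting the CR-property.

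The step I expect to be the crux is this termination of the alternation --- equivalently, producing a single witness that escapes $A_{1}$ \emph{and} $A_{2}$. The trouble is that $A_{1}$ and $A_{2}$ are arbitrary sets, in particular not closed under adding small positive elements, so having left $A_{1}$ gives no control after the next increment is added; only a careful coupling of the two failure families --- bounding the alternation depth $L$ by a function of $k_{1}$ and $k_{2}$ and closing the loop by pigeonhole, exactly as in the arbitrary-semigroup argument \cite{key-12} --- forces the process to stop. The passage to the near-zero setting contributes nothing beyond the bookkeeping of the previous paragraph, namely keeping every translate produced inside $(0,\min\{\delta_{1},\delta_{2}\})$.
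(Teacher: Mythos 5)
The setup (negate both conclusions, extract $k_{1},\delta_{1},k_{2},\delta_{2}$ and the failure families, shrink $\delta$, and keep all translates inside $(0,\min\{\delta_{1},\delta_{2}\})$ by passing to tails of the sequences) matches the paper, and the near-zero bookkeeping you describe is essentially what the paper does. But the core of your argument --- the alternating ``escape process'' closed by a pigeonhole over at most $L(k_{1},k_{2})$ steps --- has a genuine gap, and it is exactly the step you yourself flag as the crux. Trace the process: if the partial word $w_{1}+\cdots+w_{2s-1}$ gives a sum outside $A_{1}$, then (being a member of $F$) that sum lies in $A_{2}$; the next step pushes it outside $A_{2}$, hence into $A_{1}$; and so on. Membership merely toggles between the two cells at every stage, and no stage ever produces a sum outside \emph{both} $A_{1}$ and $A_{2}$. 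Pigeonholing over the finitely many choices of $w_{j}$ does not help, because a repeated choice is applied to a different base point each time, so nothing accumulates; and capping the word length at $L$ only stops the process without producing the contradiction. Your appeal to \cite{key-12} is also misplaced: the partition-regularity argument there is not an alternation argument but a Hales--Jewett argument, and that is the missing idea here.

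Concretely, the paper (following \cite{key-12}) takes $k=k_{1}+k_{2}$, lets $f_{1},\dots,f_{k}$ be the union of the two failure families, and for each length-$n$ word $w=b_{1}\cdots b_{n}$ over $\{1,\dots,k\}$ forms $g_{w}(y)=\sum_{i=1}^{n}f_{b_{i}}(ny+i)$, where $n$ comes from the finite Hales--Jewett theorem. Applying the CR-property of $A_{1}\cup A_{2}$ to the $k^{n}$ sequences $g_{w}$ (with threshold $\delta/2$) gives one $c$ and one $K$, and the induced $2$-coloring of words (according to whether $c+\sum_{y\in K}g_{w}(y)$ lands in $A_{1}$ or $A_{2}$) admits a variable word whose \emph{all} $k$ substitution instances land in the same cell, say $A_{1}$. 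Absorbing the constant positions into the shift $a$ and reading the variable positions as $H=nK+I\subseteq\{1,\dots,nr_{1}+n\}$ yields $a\in S\cap(0,\delta)$ with $a+\sum_{t\in H}f_{l}(t)\in A_{1}$ simultaneously for all $l\in\{1,\dots,k\}$ --- in particular for the entire failure family of $A_{1}$ at level $r=nr_{1}+n$, which is the contradiction. It is this simultaneity over all $k$ letters, supplied by Hales--Jewett, that your toggling construction cannot produce; without it (or some equivalent combinatorial input) the proposal does not close.
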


\begin{proof}
If possible let $A_{1}\cup A_{2}$ is a CR-set near zero in $S$ and
neither $A_{1}$ nor $A_{2}$ is a CR-set near zero in $S$.

For $i\in\left\{ 1,2\right\} $ pick $k_{i}\in\mathbb{N}$ and $\delta>0$
such that for every $r\in\mathbb{N}$ there exists $F\in\mathcal{P}_{f}\left(\mathcal{T}_{0}\right)$
such that $\mid F\mid\leq k_{i}$, for all $a\in S\cap\left(0,\delta\right)$
and for all $H\subseteq\left\{ 1,2,\ldots,r\right\} $, there exists
$f\in F$

\[
a+\sum_{t\in H}f(t)\notin A_{i}.
\]

Let $k=k_{1}+k_{2}$ and pick by \cite[Lemma 14.2.1]{key-13}, some
$n\in\mathbb{N}$ such that whenever the length $n$ words over $\left\{ 1,2,\ldots,k\right\} $
are 2-colored, there is a variable word $w\left(v\right)$ beginning
and ending with a constant and having no adjacent occurrences of $v$
such that $\left\{ w\left(l\right):l\in\left\{ 1,2,\ldots,k\right\} \right\} $
is monochromatic.

Let $h\in\mathcal{T}_{0}$. For each $t\in\mathbb{N}$, we can choose
$s_{t}\in\mathbb{N}$ such that $h\left(s_{t}\right)<\frac{1}{2^{t+1}}\delta$.
Since we could replace each $f_{i}$ by the sequence $f_{i}^{'}$,
defined by $f_{i}'\left(t\right)=f_{i}\left(s_{t}\right)$, we may
suppose that $\sum_{t=1}^{\infty}h\left(t\right)<\frac{1}{2}\delta$.

Let $W$ be the set of length $n$ words over $\left\{ 1,2,\ldots,k\right\} $.
For $w=b_{1}b_{2}\ldots b_{n}$ where each $b_{i}\in\left\{ 1,2,...,k\right\} $.
Define $g_{w}:\mathbb{N}\rightarrow S$ for $y\in\mathbb{N}$ by,
$g_{w}(y)=\sum_{i=1}^{n}f_{b_{i}}\left(ny+i\right)$. If $G=\left\{ g_{w}:w\in W\right\} $
, then $\mid G\mid\leq k^{n}=\alpha$ and $A_{1}\cup A_{2}$ is a
CR-set near zero in $S$, then for $\alpha\in\mathbb{N}$ there exists
$r_{1}\in\mathbb{N}$ and for $G\in\mathcal{P}_{f}\left(\mathcal{T}_{0}\right)$
such that $\mid G\mid\leq\alpha$ there exist $c\in\left(0,\frac{1}{2}\delta\right)$
and $r_{1}\in\mathbb{N}$ with $K\subseteq\left\{ 1,2,\ldots,r_{1}\right\} $
such that for all $w\in W$ 
\[
c+\sum_{y\in K}g_{w}\left(y\right)\in A_{1}\cup A_{2}.
\]
Define $\varphi:W\to\left\{ 1,2\right\} $ by $\varphi\left(w\right)=1$
if and only if $c+\sum_{y\in K}g_{w}\left(y\right)\in A_{1}$. Pick
a variable word $w\left(v\right)$, beginning and ending with a constant
and without successive occurrences of $v$ such that $\varphi$ is
constant on $\left\{ w\left(l\right):l\in\left\{ 1,2,\ldots,k\right\} \right\} .$
Assume without loss of generality that $\varphi\left(w\left(l\right)\right)=1$
for all $l\in\left\{ 1,2,\cdots,k\right\} ,$then for all $l\in\left\{ 1,2,\ldots,k\right\} $
\[
c+\sum_{t\in K}g_{w\left(l\right)}\left(t\right)\in A_{1}.
\]

Let $w\left(v\right)=b_{1}b_{2}\ldots b_{n}$ where each $b_{i}\in\left\{ 1,2,\ldots,k\right\} \cup\left\{ v\right\} $,
some $b_{i}=v,b_{1},b_{n}\neq v$, and if $b_{i}=v$, then $b_{i+1}\neq v$.

Let $C=\left\{ i\in\left\{ 1,2,\ldots,n\right\} :b_{i}\in\left\{ 1,2,\ldots,k\right\} \right\} $
and $I=\left\{ i\in\left\{ 1,2,\ldots,n\right\} :b_{i}=v\right\} $.
Then, for each $l\in\left\{ 1,2,\ldots,k\right\} $we have 

\[
c+\sum_{t\in K}g_{w\left(l\right)}\left(t\right)=c+\sum_{y\in K}\sum_{i\in C}f_{b_{i}}\left(ny+i\right)+\sum_{y\in K}\sum_{i\in I}f_{l}\left(ny+i\right).
\]

Let $a=c+\sum_{y\in K}\sum_{i\in C}f_{b_{i}}\left(ny+i\right)$.

Then $a\in S\cap\left(0,\delta\right)$ and $a+\sum_{y\in K}\sum_{i\in I}f_{l}\left(ny+i\right)\in A_{1}$
for every $l\in\left\{ 1,2,\ldots,k\right\} $.

Putting $H=nK+I$, then $H\subseteq\left\{ 1,2,\ldots,nr_{1}+n\right\} $,
we have 

\[
\sum_{y\in K}\sum_{i\in I}f_{l}\left(ny+i\right)=\sum_{y\in H}f_{l}\left(y\right)
\]

because the sets of the form $nK+i$, with $i\in I$, are pairwise
disjoint. It follows that $a+\sum_{t\in H}f_{l}(t)\in A_{1}$ for
every $l\in\left\{ 1,2,\ldots,k\right\} $ and hence a contradiction
arise.
\end{proof}
We know that 
\[
J\left(S\right)=\left\{ p\in\beta S:\forall A\in p,A\text{ is a J-set}\right\} 
\]
 is a two sided ideal of $\beta S$.

In \cite[Thorem 3.9]{key-4} Bayatmanesh, Tootkaboni proved that 
\[
J_{0}\left(S\right)=\left\{ p\in\beta S:\forall A\in p,A\text{ is a J-set near zero}\right\} 
\]
 is also a two sided ideal of $0^{+}\left(S\right)$.

Also 
\[
CR\left(S\right)=\left\{ p\in\beta S:\text{ for all }A\in p,A\text{ is a CR-set}\right\} 
\]

is a two sided ideal of $\beta S$ \cite[Theorem 2.6]{key-12}.

So we expect that $CR^{0}\left(S\right)=\left\{ p\in0^{+}\left(S\right):\forall A\in p,A\text{ is CR-set near zero}\right\} $
is also a two sided ideal of $0^{+}\left(S\right)$. The proof is
in the following.
\begin{lem}
\label{I} Let $S$ be a dense subsemigroup of $\left(\left(0,\infty\right),+\right)$.
Then $k\text{-}CR^{0}\left(S\right)$ is two sided ideal of $0^{+}\left(S\right)$
for all $k\in\mathbb{N}$.
\end{lem}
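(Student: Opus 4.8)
The plan is to prove the two ideal properties separately: first that $k\text{-}CR^{0}(S)$ is nonempty and closed (so that it makes sense as a subsemigroup of the compact space $0^{+}(S)$), then that it absorbs multiplication on both sides. The crucial input is that $k\text{-}CR$-sets near zero are partition regular: the argument of Lemma~\ref{PRCR near 0} works verbatim with $|F|\le k$ in place of a variable bound, since there $k$ was simply fixed as $k_1+k_2$ and one colours length-$n$ words over $\{1,\ldots,k\}$ — so I would first record that $k\text{-}CR^{0}(S)$ is partition regular, hence that $\{A\subseteq S: A \text{ is a }k\text{-}CR\text{-set near zero}\}$ together with the sets $(0,\delta)$ generates a proper filter, giving $k\text{-}CR^{0}(S)\neq\emptyset$ inside $0^{+}(S)$. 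Closedness is routine: if $p\notin k\text{-}CR^{0}(S)$ then some $A\in p$ fails to be a $k\text{-}CR$-set near zero, and then $\overline{A}\cap 0^{+}(S)$ is a neighbourhood of $p$ disjoint from $k\text{-}CR^{0}(S)$.

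For the ideal property I would first show $k\text{-}CR^{0}(S)$ is a \emph{left} ideal: given $p\in k\text{-}CR^{0}(S)$ and $q\in 0^{+}(S)$, I must show $q+p\in k\text{-}CR^{0}(S)$, i.e. every $A\in q+p$ is a $k\text{-}CR$-set near zero. By definition of the operation, $A\in q+p$ means $\{x\in S: -x+A\in p\}\in q$; since $q\in 0^{+}(S)$, this set meets $(0,\delta)$ for every $\delta>0$, so we may pick $x\in S\cap(0,\delta/2)$ with $B:=-x+A\in p$. As $p\in k\text{-}CR^{0}(S)$, $B$ is a $k\text{-}CR$-set near zero; now given $F\in\mathcal P_f(\mathcal T_0)$ with $|F|\le k$ and the tolerance $\delta/2$, obtain $r$, then $a\in S\cap(0,\delta/2)$ and $H\subseteq\{1,\ldots,r\}$ with $a+\sum_{t\in H}f(t)\in B$ for all $f\in F$. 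Then $(x+a)+\sum_{t\in H}f(t)\in A$ with $x+a\in S\cap(0,\delta)$, witnessing that $A$ is a $k\text{-}CR$-set near zero with the same $r$.

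For the \emph{right} ideal property, take $p\in k\text{-}CR^{0}(S)$, $q\in 0^{+}(S)$, and $A\in p+q$. Here $\{x\in S: -x+A\in q\}\in p$; call this set $C$. I claim $C$ is contained in no failure of the $k\text{-}CR$ condition in a way that lets us transfer: fix $k$, $\delta$, and let $r$ be the constant witnessing that $C$ is a $k\text{-}CR$-set near zero for tolerance $\delta$. Given $F=\{f_1,\ldots,f_j\}\in\mathcal P_f(\mathcal T_0)$, $j\le k$, with tolerance $\delta$, apply the $k\text{-}CR$ property of $C$ to get $a\in S\cap(0,\delta)$ and $H\subseteq\{1,\ldots,r\}$ with $x:=a+\sum_{t\in H}f_i(t)\in C$ for each $i$ — but this gives one point $x$ working simultaneously only if the same $a,H$ work for all $f_i$, which they do. Now $x\in C$ means $-x+A\in q$, hence (as $q\in 0^{+}(S)$) there is $y\in S$, arbitrarily small, with $x+y\in A$; taking $y$ small enough that the whole sum stays in $(0,\delta)$ and replacing $a$ by $a+y$, we get $(a+y)+\sum_{t\in H}f_i(t)\in A$. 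The point I expect to require the most care is exactly this last transfer: since the $k\text{-}CR$ witness $(a,H)$ for $C$ is produced \emph{before} we know $y$, but $y$ depends only on the single resulting point $x=a+\sum_{t\in H}f_i(t)$ (which is the same for all $i$ up to the choice of $f_i$ — in fact $x$ is independent of $i$ only in the $a,H$ part), one must argue that it suffices to find \emph{one} small $y$ shifting $x$ into $A$; this works because $x^{-1}A\in q$ for that specific $x$ and $0^{+}(S)$-membership of $q$ forces $x^{-1}A\cap(0,\delta)\neq\emptyset$. I would present this carefully, noting the near-zero bookkeeping ($x+a$, $a+y$ lying in $(0,\delta)$) is where the ``near zero'' hypothesis on $q$ is used, exactly as in the classical proof that $CR(S)$ is a two-sided ideal of $\beta S$.
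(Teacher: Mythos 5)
Your two absorption arguments follow the same route as the paper's proof, and your $q+p$ case goes through (with the minor caveat that the witness $r$ must be chosen from the $k\text{-}CR$ property of $B=-x+A$ at tolerance $\delta/2$ \emph{before} $F$ is given, which your construction permits since $x$ depends only on $\delta$). The genuine gap is in the $p+q$ case, at exactly the step you single out. Writing $C=\left\{ x\in S:-x+A\in q\right\} \in p$ and applying the $k\text{-}CR$ property of $C$ to $F=\left\{ f_{1},\ldots,f_{j}\right\} $, the same pair $\left(a,H\right)$ indeed works for all $i$, but the resulting elements $x_{i}=a+\sum_{t\in H}f_{i}\left(t\right)$ of $C$ are in general distinct for distinct $i$; there is no single point $x$. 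Your justification --- that one small $y$ with $x+y\in A$ exists because $-x+A\in q$ ``for that specific $x$'' --- therefore only yields $\left(a+y\right)+\sum_{t\in H}f_{i}\left(t\right)\in A$ for one value of $i$, whereas the definition of a $k\text{-}CR$-set near zero requires a single shift working for all $f\in F$ simultaneously. The missing (and standard) step is the filter property of $q$: each $-x_{i}+A\in q$ and $\left(0,\delta-a\right)\cap S\in q$ because $q\in0^{+}\left(S\right)$ and $\delta-a>0$, hence
\[
\bigcap_{i=1}^{j}\Bigl(-\Bigl(a+\sum_{t\in H}f_{i}\left(t\right)\Bigr)+A\Bigr)\cap\left(0,\delta-a\right)\in q,
\]
so this set is nonempty; picking $y$ in it and setting $b=a+y\in S\cap\left(0,\delta\right)$ gives $b+\sum_{t\in H}f_{i}\left(t\right)\in A$ for every $i$ with the same $H\subseteq\left\{ 1,2,\ldots,r\right\} $. (The paper avoids the $\delta-a$ bookkeeping by running both choices at tolerance $\delta/2$.) Note also that only $a+y$ needs to lie in $\left(0,\delta\right)$, not ``the whole sum''.

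A second, smaller problem is your preliminary claim that the proof of Lemma \ref{PRCR near 0} gives partition regularity of $k\text{-}CR$-sets near zero ``verbatim'' for fixed $k$: that proof applies the CR hypothesis on $A_{1}\cup A_{2}$ to a family $G$ of cardinality $k^{n}$ (with $k=k_{1}+k_{2}$ supplied by the two assumed failures), so a fixed-$k$ version would need $A_{1}\cup A_{2}$ to be a $k^{n}\text{-}CR$-set near zero, which does not follow from its being a $k\text{-}CR$-set near zero. The paper's own proof of this lemma only verifies the two absorption properties and is silent on nonemptiness; if you do want nonemptiness, it is cleaner to observe that every CR-set near zero is a $k\text{-}CR$-set near zero, so $CR^{0}\left(S\right)\subseteq k\text{-}CR^{0}\left(S\right)$, and $CR^{0}\left(S\right)\neq\emptyset$ by Lemma \ref{CR=00003DK}. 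Your closedness remark is fine as stated.
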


\begin{proof}
Let us first fixed $k$ and $p\in k\text{-}CR^{0}\left(S\right)$
and $q\in0^{+}\left(S\right)$. We have to show $p+q,q+p\in k\text{-}CR^{0}\left(S\right)$.

Let $A\in p+q$ , So $B=\left\{ x\in S:-x+A\in q\right\} \in p\in k\text{-}CR^{0}\left(S\right)$.
So by definition of $k\text{-}CR^{0}\left(S\right)$, $B$ is a k-CR-set
near zero. 

So there exists $r\in\mathbb{N}$ whenever $F\in P_{f}\left(\mathcal{T}_{0}\right)$
with $\mid F\mid<k$ and for any $\delta>0$, there exist $a\in S\cap\left(0,\frac{\delta}{2}\right)$,
and $H\subseteq\left\{ 1,2,\ldots,r\right\} $ such that for all $f\in F$
\[
a+\sum_{t\in H}f\left(t\right)\in B
\]

Then for all $f\in F$, 
\[
-\left(a+\sum_{t\in H}f\left(t\right)\right)+A\in q
\]
 i,e, 
\[
\bigcap_{f\in F}\left(-\left(a+\sum_{t\in H}f\left(t\right)\right)+A\right)\in q
\]

Let $y\in\left(\bigcap_{f\in F}\left(-\left(a+\sum_{t\in H}f\left(t\right)\right)+A\right)\right)\bigcap\left(0,\frac{\delta}{2}\right)$,
So $b+\sum_{t\in H}f\left(t\right)\in A$ for all $f\in F$, where
$b=y+a\in S\cap\left(0,\delta\right)$. So $A$ is a k-CR-set near
zero. Since $A$ is arbitrary element of $p+q$, Then $p+q\in k\text{-}CR^{0}\left(S\right)$.

Let $A\in q+p$ , Assumse $B=\left\{ x\in S:-x+A\in p\right\} \in q$
, $B$ is nonempty. For any $\delta>0$ there exists $x\in B\cap\left(0,\frac{\delta}{2}\right)$,
then $-x+A\in p\in k\text{-}CR^{0}\left(S\right)$. By definition
of $k\text{-}CR^{0}\left(S\right)$, $-x+A$ is a $k\text{-}CR$-set
near zero. So there exists $r\in\mathbb{N}$ whenever $F\in P_{f}\left(\mathcal{T}_{0}\right)$
with $\mid F\mid<k$ and for $\delta>0$, there exist $a\in S\cap\left(0,\frac{\delta}{2}\right)$,
and $H\subseteq\left\{ 1,2,\ldots,r\right\} $ such that for all $f\in F$
\[
a+\sum_{t\in H}f\left(t\right)\in-x+A.
\]

i.e, For all $f\in F$,
\[
b+\sum_{t\in H}f\left(t\right)\in A
\]

where $b=x+a\in S\cap\left(0,\delta\right)$ . So $A$ is a k-CR-set
near zero. Since $A$ is arbitrary element of $p+q$, Then $p+q\in k\text{-}CR^{0}\left(S\right)$.
So $k\text{-}CR^{0}\left(S\right)$ is two sided ideal of $0^{+}\left(S\right)$
for all $k\in\mathbb{N}$. And hence $CR^{0}\left(S\right)=\bigcup_{k\in\mathbb{N}}k\text{-}CR^{0}\left(S\right)$
is two sided ideal of $0^{+}\left(S\right)$.
\end{proof}
Then by Ellis theorem \cite[Corollary 2.39]{key-13}, from the lemma
(\ref{PRCR near 0}) and lemma (\ref{I}) we have $CR^{0}\left(S\right)$
is a compact right topological semigroup. Then $E\left(CR^{0}\left(S\right)\right)\neq\emptyset$,
in the next section we shall named the eliments from the idempotent
in $CR^{0}\left(S\right)$ are essential CR-sets near zero.

\section{Characterisation of essential CR-sets near zero}

There are several notions of largeness in topological dynamics, and
most of them play a significant role in Ramsey Theory. One such notion
is the central set. The concept of central set was first introduced
by H. Frustenberg using the notions of proximality and uniformly recurrent
from topological dynamics in \cite{key-10}. Later, Patra characterised
central sets near zero dynamically in \cite{key-15}. We present a
dynamic and basic characterization of essential CR sets near zero
in this section. Let's give a brief description of the well-known
large sets. We can begin by stating the well-known definition of a
topological dynamical system.
\begin{defn}
A dynamical system is a pair $\left(X,\left\langle T_{s}\right\rangle _{s\in S}\right)$
such that
\begin{enumerate}
\item $X$ is compact Hausdorff space,
\item $S$ is a semigroup,
\item For each $s\in S$, $T_{s}:X\to X$ and $T_{s}$ is continuous, and
\item For all $s,t\in S$, $T_{s}\circ T_{t}=T_{st}$.
\end{enumerate}
\end{defn}

We now state basic definitions, conventions and results for dynamical
characterization of members of certain idempotent ultrafilters in
the following.
\begin{defn}
Let $S$ be a nonempty discrete space and $\mathcal{K}$ be a filter
on $S$.
\begin{enumerate}
\item $\overline{\mathcal{K}}=\left\{ p\in\beta S:\mathcal{K}\subseteq p\right\} $
\item $L\left(\mathcal{K}\right)=\left\{ A\subseteq S:S\setminus A\notin\mathcal{K}\right\} $
\end{enumerate}
\end{defn}

\begin{thm}
\label{Kappa} Let $S$ be a nonempty discrete space and $\mathcal{K}$
be a filter on $S$.
\begin{enumerate}
\item $\overline{\mathcal{K}}=\left\{ p\in\beta S:A\in L\left(\mathcal{K}\right)\text{ for all }A\in p\right\} $
\item Let $\beta\subseteq L\left(\mathcal{K}\right)$ be closed under finite
intersections then there exists a $p\in\beta S$ with $\beta\subseteq p\subseteq L\left(\mathcal{K}\right)$.
\end{enumerate}
\end{thm}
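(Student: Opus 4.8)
The plan is to prove part (1) first and then deduce part (2) from it via a standard maximal-filter / ultrafilter-extension argument. For part (1), I would argue both inclusions directly from the definitions. Fix $p\in\beta S$. Suppose first that $\mathcal{K}\subseteq p$, and let $A\in p$; I must show $A\in L(\mathcal{K})$, i.e. $S\setminus A\notin\mathcal{K}$. If instead $S\setminus A\in\mathcal{K}$, then $S\setminus A\in p$ by hypothesis, so both $A$ and $S\setminus A$ lie in the ultrafilter $p$, forcing $\emptyset=A\cap(S\setminus A)\in p$, which is impossible. Conversely, suppose every $A\in p$ satisfies $A\in L(\mathcal{K})$; I must show $\mathcal{K}\subseteq p$. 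Take $B\in\mathcal{K}$. Since $p$ is an ultrafilter, either $B\in p$ or $S\setminus B\in p$. In the latter case $S\setminus B\in L(\mathcal{K})$, i.e. $S\setminus(S\setminus B)=B\notin\mathcal{K}$, contradicting $B\in\mathcal{K}$. Hence $B\in p$, so $\mathcal{K}\subseteq p$. This establishes the set equality in (1).

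For part (2), the plan is to enlarge $\beta$ to an ultrafilter while staying inside $L(\mathcal{K})$, using a Zorn's Lemma argument on the poset of families that are closed under finite intersections, contained in $L(\mathcal{K})$, and contain $\beta$. First I note that $\beta$ has the finite intersection property: given $B_1,\dots,B_n\in\beta$, their intersection is again in $\beta$ (closure under finite intersections), and it lies in $L(\mathcal{K})$, so in particular it is nonempty (since $\emptyset\notin L(\mathcal{K})$, as $S=S\setminus\emptyset\in\mathcal{K}$). Hence $\beta$ generates a proper filter $\mathcal{F}$, and one checks $\mathcal{F}\subseteq L(\mathcal{K})$: any element of $\mathcal{F}$ contains a finite intersection from $\beta$, hence contains an element of $L(\mathcal{K})$; and $L(\mathcal{K})$ is upward closed (if $A\subseteq A'$ and $S\setminus A\notin\mathcal{K}$ then $S\setminus A'\subseteq S\setminus A$ and, $\mathcal{K}$ being a filter hence upward closed, $S\setminus A'\notin\mathcal{K}$).

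The remaining step is to extend $\mathcal{F}$ to an ultrafilter $p$ with $p\subseteq L(\mathcal{K})$. Consider the collection of all filters $\mathcal{G}$ with $\mathcal{F}\subseteq\mathcal{G}\subseteq L(\mathcal{K})$, ordered by inclusion; chains have upper bounds (the union of a chain of such filters is again such a filter), so by Zorn's Lemma there is a maximal one, call it $p$. I claim $p$ is an ultrafilter. If not, there is $A\subseteq S$ with $A\notin p$ and $S\setminus A\notin p$; I would show that at least one of the filters generated by $p\cup\{A\}$ or by $p\cup\{S\setminus A\}$ is still contained in $L(\mathcal{K})$, contradicting maximality. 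Indeed, if the filter generated by $p\cup\{A\}$ is \emph{not} contained in $L(\mathcal{K})$, there is $C\in p$ with $C\cap A\notin L(\mathcal{K})$, i.e. $S\setminus(C\cap A)=(S\setminus C)\cup(S\setminus A)\in\mathcal{K}$; similarly a failure on the other side gives $D\in p$ with $(S\setminus D)\cup A\in\mathcal{K}$. Intersecting these two members of $\mathcal{K}$ yields $\bigl((S\setminus C)\cup(S\setminus A)\bigr)\cap\bigl((S\setminus D)\cup A\bigr)\in\mathcal{K}$, and this set is contained in $(S\setminus C)\cup(S\setminus D)=S\setminus(C\cap D)$; by upward closure of $\mathcal{K}$ we get $S\setminus(C\cap D)\in\mathcal{K}$, contradicting $C\cap D\in p\subseteq L(\mathcal{K})$. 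Hence one of the two extensions stays in $L(\mathcal{K})$, so $p$ was not maximal — a contradiction proving $p$ is an ultrafilter. Finally $\beta\subseteq\mathcal{F}\subseteq p$ and $p\subseteq L(\mathcal{K})$, as required. The one place demanding care is precisely this last maximality-versus-ultrafilter step: the computation showing that a simultaneous obstruction on both sides collapses to an obstruction for $C\cap D\in p$ is the heart of the argument, and is the analogue of the classical fact that $L(\mathcal{K})$ is exactly the union of the ultrafilters containing $\mathcal{K}$.
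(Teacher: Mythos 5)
Your proof is correct, but it takes a different route from the paper: the paper disposes of both assertions in one line by citing Theorem 3.11 of Hindman and Strauss, which already packages the facts that $\overline{\mathcal{K}}=\bigcap_{B\in\mathcal{K}}\overline{B}$ and that any family with the finite intersection property extends to an ultrafilter with prescribed constraints. You instead prove everything from scratch: part (1) by the ultrafilter dichotomy ($A\in p$ and $S\setminus A\in\mathcal{K}\subseteq p$ would put complementary sets in $p$; conversely $B\in\mathcal{K}\setminus p$ would force $S\setminus B\in p\subseteq L(\mathcal{K})$, i.e. $B\notin\mathcal{K}$), and part (2) by a Zorn's Lemma argument on filters squeezed between the filter generated by $\beta$ and $L(\mathcal{K})$, where the key computation is that if neither $p\cup\{A\}$ nor $p\cup\{S\setminus A\}$ can be adjoined while staying inside $L(\mathcal{K})$, then witnesses $C,D\in p$ yield $\bigl((S\setminus C)\cup(S\setminus A)\bigr)\cap\bigl((S\setminus D)\cup A\bigr)\subseteq S\setminus(C\cap D)$ lying in $\mathcal{K}$, contradicting $C\cap D\in p\subseteq L(\mathcal{K})$. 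That computation is right, as are the auxiliary observations that $L(\mathcal{K})$ is upward closed, that $\emptyset\notin L(\mathcal{K})$ (since $S\in\mathcal{K}$), and that failure of containment for the generated filter is detected already on sets of the form $C\cap A$ with $C\in p$; in effect you have reproved the cited Hindman--Strauss result (your maximal filter is an ultrafilter containing $\mathcal{K}$, by part (1)). What the paper's approach buys is brevity and reliance on a standard reference; what yours buys is a self-contained, elementary argument that makes the duality between $\mathcal{K}$ and $L(\mathcal{K})$ explicit. The only cosmetic caveats are the degenerate case $\beta=\emptyset$ (then take the filter $\{S\}$, or any $p\supseteq\mathcal{K}$) and the unfortunate clash of the letter $\beta$ with $\beta S$, neither of which affects correctness.
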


\begin{proof}
Both of these assertions follows from \cite[Theorem 3.11]{key-13}.
\end{proof}
\begin{defn}
Let $S$ be a dense subsemigroup. Let $\left(X,\left\langle T_{s}\right\rangle _{s\in S}\right)$
be a dynamical system, $x$ and $y$ in $X$ and $\mathcal{K}$ be
a filter on $S$. The pair $\left(x,y\right)$ is called jointly $\mathcal{K}$-recurrence
if and only if for every neighborhood $U$ of $y$ we have 
\[
\left\{ s\in S:T_{s}\left(x\right),T_{s}\left(y\right)\in U\right\} \in L\left(\mathcal{K}\right).
\]
\end{defn}

\begin{thm}
\label{Th 1.8} Let $S$ be a nonempty discrete space and $\mathcal{K}$
be a filter on $S$ such that $\overline{\mathcal{K}}$ is a compact
subsemigroup of $\beta S$, and let $A\subseteq S$. Then $A$ is
a member of an idempotent in $\overline{\mathcal{K}}$ if and only
if there exists a dynamics system $\left(X,\left\langle T_{s}\right\rangle _{s\in S}\right)$
with points $x$ and $y$ in $X$ and there exists a neighborhood
$U$ of $y$ such that the pair $\left(x,y\right)$ is jointly $\mathcal{K}$-recurrence
and $A=\left\{ s\in S:T_{s}\left(x\right)\in U\right\} $.
\end{thm}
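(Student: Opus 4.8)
The plan is to follow the standard template for dynamical characterizations of idempotent ultrafilter members (as in \cite[Theorem 19.21 and related results]{key-13}), adapted to the relative setting of a filter $\mathcal{K}$ with $\overline{\mathcal{K}}$ a compact subsemigroup. The argument splits into the two implications, and in both directions the ``near-zero'' or ``relative'' flavour enters only through the bookkeeping device $L(\mathcal{K})$ and Theorem \ref{Kappa}.

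For the necessity direction, suppose $A$ is a member of an idempotent $p \in \overline{\mathcal{K}}$. First I would build the dynamical system: let $X = \{0,1\}^{S}$ with the product topology (so $X$ is compact Hausdorff), and for $s \in S$ let $T_s \colon X \to X$ be the shift defined by $\left(T_s(z)\right)(t) = z(t+s)$ for $z \in X$, $t \in S$; one checks $T_s$ is continuous and $T_s \circ T_t = T_{s+t}$, so $\left(X, \langle T_s\rangle_{s\in S}\right)$ is a dynamical system. Take $x = \mathbf{1}_A \in X$, the indicator function of $A$, and let $y = p\text{-}\!\lim_{s\in S} T_s(x)$, the limit of the net $\langle T_s(x)\rangle$ along the ultrafilter $p$, which exists by compactness. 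Put $U = \{z \in X : z(s) = 1 \text{ for } s \in \text{(a suitable finite set)}\}$ — more precisely, since $A \in p$, one has $x(s) = \mathbf 1_A(s)$, and using that $p = p+p$ one shows $y(s) = 1$ for all $s$ in some member of $p$; then $U$ should be the basic clopen neighborhood of $y$ determined by finitely many coordinates on which $y$ is $1$, chosen so that $A = \{s \in S : T_s(x) \in U\}$ (this forces $U$ to be cut out by the single coordinate $0$, i.e.\ $U = \{z : z(0) = 1\}$, and then $T_s(x) \in U \iff x(s) = 1 \iff s \in A$). The remaining point is that $(x,y)$ is jointly $\mathcal{K}$-recurrent: given a neighborhood $V$ of $y$, cut down to a basic clopen $V$ determined by coordinates in a finite set $F$; using idempotency of $p$ and $A\in p$, the set $\{s : T_s(x) \in V \text{ and } T_s(y) \in V\}$ is shown to lie in $p$, and since $p \in \overline{\mathcal{K}}$, Theorem \ref{Kappa}(1) gives that every member of $p$ is in $L(\mathcal{K})$, so this set is in $L(\mathcal{K})$ as required.

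For the sufficiency direction, suppose we are given $\left(X,\langle T_s\rangle_{s\in S}\right)$, points $x,y$, a neighborhood $U$ of $y$ with $(x,y)$ jointly $\mathcal{K}$-recurrent and $A = \{s : T_s(x) \in U\}$. I would consider the family
\[
\mathcal{B} = \left\{ \{s \in S : T_s(x) \in V,\ T_s(y) \in V\} : V \text{ a neighborhood of } y \right\},
\]
which is closed under finite intersections (intersect the neighborhoods) and, by joint $\mathcal{K}$-recurrence, is contained in $L(\mathcal{K})$. By Theorem \ref{Kappa}(2) there is $p \in \beta S$ with $\mathcal{B} \subseteq p \subseteq L(\mathcal{K})$; by Theorem \ref{Kappa}(1) again, $p \in \overline{\mathcal{K}}$. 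Now one restricts attention to $\overline{\mathcal{K}}\cap \overline{\mathcal{B}}$ — or rather the closure of the relevant set — and checks it is a compact subsemigroup of $\overline{\mathcal{K}}$: the key computation is that if $q,r$ are ultrafilters each containing all of $\mathcal{B}$, then $q+r$ does too, which uses continuity of each $T_s$ to show $\{s : T_s(x),T_s(y) \in V\} \supseteq \{s : T_s(x'),T_s(y') \in V'\} \cap (\text{translation data})$ for appropriate smaller $V'$ and the points $x' = q\text{-}\!\lim T_s x$ etc. Having a compact right topological semigroup, Ellis's theorem yields an idempotent $p$ in it; then $p \in \overline{\mathcal{K}}$ and, since $y \in \overline{\{T_s(x) : s \in S\}}$ forces $U \in p$ via $\{s : T_s x \in U\} \supseteq \{s: T_s x, T_s y \in U\} \in \mathcal{B} \subseteq p$, we get $A \in p$, completing the proof.

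The main obstacle I anticipate is the sufficiency direction: specifically, identifying the correct compact subsemigroup of $\overline{\mathcal{K}}$ in which to extract the idempotent, and verifying it is closed under the operation of $\beta S$. This is where continuity of the maps $T_s$ and the semigroup law $T_s \circ T_t = T_{st}$ must be combined carefully with the ultrafilter-limit construction of auxiliary points $x',y' \in X$; the subtlety is that joint recurrence is a statement about the single pair $(x,y)$, so one must propagate it to the pair $(x, q\text{-}\!\lim T_s y)$ and show membership of the recurrence sets is preserved under $+$. The hypothesis that $\overline{\mathcal{K}}$ is itself a compact subsemigroup is exactly what guarantees the ambient semigroup structure needed for this, so it will be invoked precisely at the point of intersecting our recurrence-based closed set with $\overline{\mathcal{K}}$. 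Everything else — the construction of the shift system, the choice of $U$, and the routine verification that membership in an idempotent gives joint recurrence — is standard and follows the pattern in \cite{key-13}.
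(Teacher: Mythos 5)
Your proposal reconstructs the standard argument; note that the paper itself does not prove Theorem \ref{Th 1.8} at all but simply cites \cite[Theorem 3.3]{key-14}, so the comparison is with that standard proof. Your sufficiency half is essentially correct and matches it: the family $\mathcal{B}=\left\{ \left\{ s\in S:T_{s}\left(x\right),T_{s}\left(y\right)\in V\right\} :V\text{ a neighborhood of }y\right\} $ is literally closed under finite intersections (intersecting two members gives the member for $V_{1}\cap V_{2}$), it lies in $L\left(\mathcal{K}\right)$ by joint $\mathcal{K}$-recurrence, Theorem \ref{Kappa}(2) yields $p$ with $\mathcal{B}\subseteq p\subseteq L\left(\mathcal{K}\right)$, so the closed set $Z=\left\{ q\in\beta S:\mathcal{B}\subseteq q\right\} $ meets $\overline{\mathcal{K}}$; the subsemigroup property of $Z$ is exactly the iterated-limit computation $\left(q+r\right)\text{-}\lim_{s}T_{s}x=q\text{-}\lim_{u}T_{u}\left(r\text{-}\lim_{v}T_{v}x\right)=q\text{-}\lim_{u}T_{u}\left(y\right)=y$ (and the same with $y$ in place of $x$), which you should write out instead of the vague ``translation data''; then Ellis's theorem applied to the compact semigroup $Z\cap\overline{\mathcal{K}}$ together with $A\supseteq\left\{ s:T_{s}x,T_{s}y\in U\right\} \in\mathcal{B}$ gives the idempotent containing $A$.

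The genuine gap is in your necessity direction. With $X=\left\{ 0,1\right\} ^{S}$ the set $U=\left\{ z:z\left(0\right)=1\right\} $ is meaningless: $0\notin S$ and $S$ need not have an identity --- in this paper $S$ is a dense subsemigroup of $\left(\left(0,\infty\right),+\right)$, so no coordinate of $T_{s}\left(x\right)$ equals $x\left(s\right)$. Worse, with index set $S$ alone any basic clopen $U$ determined by coordinates $t_{1},\ldots,t_{n}\in S$ makes $\left\{ s:T_{s}x\in U\right\} $ a finite intersection of sets of the form $-t_{i}+A$ or their complements, which in general is not $A$, so the required exact equality $A=\left\{ s:T_{s}\left(x\right)\in U\right\} $ cannot be achieved by your construction. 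The standard repair is to adjoin an identity: take $X=\left\{ 0,1\right\} ^{S\cup\left\{ e\right\} }$ with $\left(T_{s}z\right)\left(t\right)=z\left(t+s\right)$ and $e+s=s$, set $x=\chi_{A}$, $y=p\text{-}\lim_{s}T_{s}x$ and $U=\left\{ z:z\left(e\right)=1\right\} $; then $y\in U$ precisely because $A\in p$, and $A=\left\{ s:T_{s}x\in U\right\} $ exactly. In addition, the claim that $\left\{ s:T_{s}x\in V,\,T_{s}y\in V\right\} \in p$ for every neighborhood $V$ of $y$ is the heart of this direction and is only asserted in your sketch: for basic clopen $V$ one has $\left\{ s:T_{s}x\in V\right\} \in p$ since $y=p\text{-}\lim_{s}T_{s}x$, while $\left\{ s:T_{s}y\in V\right\} \supseteq\left\{ s:\left\{ t:T_{s+t}x\in V\right\} \in p\right\} \in p$ using $p=p+p$, continuity of $T_{s}$ and closedness of $V$; only after this does Theorem \ref{Kappa}(1) convert membership in $p$ into membership in $L\left(\mathcal{K}\right)$. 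With these repairs your argument becomes the cited standard proof.
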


\begin{proof}
\cite[Theorem 3.3]{key-14}.
\end{proof}
\begin{lem}
\label{CR=00003DK} Let $S$ be a dense subsemigroup of $\left(\left(0,\infty\right),+\right)$
and 
\[
\mathcal{K}=\left\{ A\subseteq S:S\setminus A\text{ is not a CR-set near zero}\right\} .
\]
 Then $K$ is a filter on $S$ with $CR^{0}\left(S\right)=\overline{\mathcal{K}}$
and $CR^{0}\left(S\right)$ is a compact subsemigroup of $\beta S$.
\end{lem}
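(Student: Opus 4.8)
The plan is to establish the three assertions in turn: that $\mathcal{K}$ is a proper filter on $S$, that $\overline{\mathcal{K}}=CR^{0}\left(S\right)$, and that this set is a compact subsemigroup of $\beta S$. Everything rests on four elementary observations about CR-sets near zero: (i) $\emptyset$ is not a CR-set near zero (since $\mathcal{T}_{0}\neq\emptyset$, the defining condition fails already for a single-element $F$); (ii) $S$ itself is a CR-set near zero (given $k$ and $\delta$ take $r=1$, and for any admissible $F$ take some $a\in S\cap\left(0,\delta\right)$, which exists by density of $S$, together with $H=\{1\}$, using that $S$ is a subsemigroup); (iii) CR-sets near zero are upward closed; and (iv) a union of two sets neither of which is a CR-set near zero is not a CR-set near zero, which is precisely the contrapositive of Lemma \ref{PRCR near 0}.

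First I would check the filter axioms. By (i), $S\setminus S=\emptyset$ is not a CR-set near zero, so $S\in\mathcal{K}$ and $\mathcal{K}\neq\emptyset$; by (ii), $S\setminus\emptyset=S$ is a CR-set near zero, so $\emptyset\notin\mathcal{K}$. If $A\in\mathcal{K}$ and $A\subseteq B$, then $S\setminus B\subseteq S\setminus A$, and since $S\setminus A$ is not a CR-set near zero, neither is $S\setminus B$ by (iii), so $B\in\mathcal{K}$. If $A_{1},A_{2}\in\mathcal{K}$, then $S\setminus\left(A_{1}\cap A_{2}\right)=\left(S\setminus A_{1}\right)\cup\left(S\setminus A_{2}\right)$ is not a CR-set near zero by (iv), so $A_{1}\cap A_{2}\in\mathcal{K}$. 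In the course of this I would also record that $A\in L\left(\mathcal{K}\right)$ means $S\setminus A\notin\mathcal{K}$, which is to say $A$ is a CR-set near zero; thus $L\left(\mathcal{K}\right)$ is exactly the family of CR-sets near zero.

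Next, for the equality $\overline{\mathcal{K}}=CR^{0}\left(S\right)$, Theorem \ref{Kappa}(1) gives $\overline{\mathcal{K}}=\{p\in\beta S:\text{every }A\in p\text{ is a CR-set near zero}\}$. For such a $p$ and any $\delta>0$ I would show $\left(0,\delta\right)\cap S\in p$, so that $p\in0^{+}\left(S\right)$ and hence $p\in CR^{0}\left(S\right)$; it suffices to show $\left[\delta,\infty\right)\cap S$ is not a CR-set near zero. Indeed, taking $k=1$ and the threshold $\delta/2$ in the definition, whatever $r$ is returned one may pick a single $f\in\mathcal{T}_{0}$ with $\sum_{t=1}^{r}f\left(t\right)<\delta/2$, and then $a+\sum_{t\in H}f\left(t\right)<\delta$ for every $a\in S\cap\left(0,\delta/2\right)$ and every $H\subseteq\{1,\dots,r\}$, violating the definition. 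Conversely, if $p\in CR^{0}\left(S\right)$ and $B\in\mathcal{K}$, then $S\setminus B$ is not a CR-set near zero, hence $S\setminus B\notin p$, so $B\in p$; thus $\mathcal{K}\subseteq p$ and $p\in\overline{\mathcal{K}}$.

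Finally, $\overline{\mathcal{K}}=\bigcap_{A\in\mathcal{K}}\{p\in\beta S:A\in p\}$ is an intersection of clopen subsets of $\beta S$, hence closed and therefore compact, and it is nonempty by Theorem \ref{Kappa}(2) applied to the (proper) filter $\mathcal{K}\subseteq L\left(\mathcal{K}\right)$. That it is a subsemigroup follows from Lemma \ref{I}: $CR^{0}\left(S\right)=\bigcup_{k\in\mathbb{N}}k\text{-}CR^{0}\left(S\right)$ is a two-sided ideal of $0^{+}\left(S\right)$ and is contained in it, so $p+q\in CR^{0}\left(S\right)$ whenever $p,q\in CR^{0}\left(S\right)$. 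The only step needing care is the inclusion $\overline{\mathcal{K}}\subseteq CR^{0}\left(S\right)$, where one must confirm that an ultrafilter all of whose members are CR-sets near zero automatically lies in $0^{+}\left(S\right)$ — this is where the observation that $\left[\delta,\infty\right)\cap S$ is never a CR-set near zero is used; the closure of $\mathcal{K}$ under finite intersections likewise depends essentially on Lemma \ref{PRCR near 0}, while the remainder is bookkeeping with the definitions.
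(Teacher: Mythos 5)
Your proposal is correct and follows essentially the same route as the paper: partition regularity (Lemma \ref{PRCR near 0}) for the filter axioms, Theorem \ref{Kappa} for the identification $\overline{\mathcal{K}}=CR^{0}\left(S\right)$, and Lemma \ref{I} for closure under the semigroup operation. If anything, you are more careful than the paper, since you explicitly verify that any ultrafilter all of whose members are CR-sets near zero lies in $0^{+}\left(S\right)$ (via the observation that $\left[\delta,\infty\right)\cap S$ is never a CR-set near zero), a step the paper's proof leaves implicit.
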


\begin{proof}
It's easy to verify $\mathcal{K}$ is nonempty. If $A\in\mathcal{K}$
and $A\subseteq B$, then $B\in\mathcal{K}$. From lemma (\ref{PRCR near 0}),
$\mathcal{K}$ is closed under finite intersection, so $\mathcal{K}$
is a filter. Now we have $L\left(\mathcal{K}\right)=\left\{ A\subseteq S:A\text{ is a CR-set near zero}\right\} $.
From theorem (\ref{Kappa}) , $CR^{0}\left(S\right)=\overline{\mathcal{K}}$.
By Ellis theorem \cite[Corollary 2.39]{key-13}, from the lemma (\ref{PRCR near 0})
and lemma (\ref{I}) we have $CR^{0}\left(S\right)$ is a compact
subsemigroup of $\beta S$.
\end{proof}
Here we define essential CR-set near zero in the following.
\begin{defn}
\label{def:ECR} Let $S$ be a dense subsemigroup of $\left(\left(0,\infty\right),+\right)$
and $A\subseteq S$. Then $A$ is an essential CR-set near zero if
there exists an idempotent $p\in CR^{0}\left(S\right)$ such that
$A\in p$.
\end{defn}

The following theorem is the dynamical characterization of essential
CR-set near zero.
\begin{thm}
\label{DC CR near 0} Let $S$ be a dense subsemigroup of $\left(\left(0,\infty\right),+\right)$
and $A\subseteq S$. Then $A$ is an essential CR-set near zero if
and only if there exists a dynamical system $\left(X,\left\langle T_{s}\right\rangle _{s\in S}\right)$
with points $x$ and $y$ in $X$ such that for every neighbourhood
$U$ of $y$, $\left\{ s\in S:T_{s}\left(x\right),T_{s}\left(y\right)\in U\right\} $
is a CR-set near zero and a neighbourhood $V$ of $y$ such that $A=\left\{ s\in S:T_{s}\left(x\right)\in V\right\} $.
\end{thm}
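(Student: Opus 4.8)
The plan is to derive Theorem \ref{DC CR near 0} as a direct specialization of the abstract dynamical characterization in Theorem \ref{Th 1.8}, using the filter $\mathcal{K}$ constructed in Lemma \ref{CR=00003DK}. Recall from that lemma that $\mathcal{K} = \{A \subseteq S : S \setminus A \text{ is not a CR-set near zero}\}$ is a filter, that $L(\mathcal{K}) = \{A \subseteq S : A \text{ is a CR-set near zero}\}$, and that $\overline{\mathcal{K}} = CR^{0}(S)$ is a compact subsemigroup of $\beta S$. So the hypotheses of Theorem \ref{Th 1.8} are met for this particular $\mathcal{K}$.

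First I would unwind the definition of essential CR-set near zero (Definition \ref{def:ECR}): $A$ is an essential CR-set near zero if and only if $A$ belongs to some idempotent $p \in CR^{0}(S) = \overline{\mathcal{K}}$, i.e.\ if and only if $A$ is a member of an idempotent in $\overline{\mathcal{K}}$. Now apply Theorem \ref{Th 1.8} with this $\mathcal{K}$: $A$ is a member of an idempotent in $\overline{\mathcal{K}}$ if and only if there is a dynamical system $(X, \langle T_s\rangle_{s\in S})$ with points $x, y \in X$ and a neighbourhood $V$ of $y$ such that the pair $(x,y)$ is jointly $\mathcal{K}$-recurrent and $A = \{s \in S : T_s(x) \in V\}$. (I am writing $V$ for the distinguished neighbourhood in the conclusion, matching the statement of Theorem \ref{DC CR near 0}.)

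The only remaining point is to translate the phrase ``jointly $\mathcal{K}$-recurrent'' into the language of CR-sets near zero. By definition, $(x,y)$ is jointly $\mathcal{K}$-recurrent means that for every neighbourhood $U$ of $y$, the set $\{s \in S : T_s(x), T_s(y) \in U\} \in L(\mathcal{K})$. Since $L(\mathcal{K})$ is exactly the collection of CR-sets near zero (Lemma \ref{CR=00003DK}), this says precisely that $\{s \in S : T_s(x), T_s(y) \in U\}$ is a CR-set near zero for every neighbourhood $U$ of $y$, which is the condition appearing in Theorem \ref{DC CR near 0}. Substituting this equivalence into the statement obtained from Theorem \ref{Th 1.8} gives exactly the claimed characterization, in both directions.

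I do not anticipate a genuine obstacle here: the proof is essentially a dictionary lookup, with the real work having been done in establishing that $CR^{0}(S) = \overline{\mathcal{K}}$ is a compact subsemigroup (Lemma \ref{CR=00003DK}, which itself relied on partition regularity, Lemma \ref{PRCR near 0}, and the ideal property, Lemma \ref{I}) and in the general Theorem \ref{Th 1.8}. The one place to be slightly careful is to make sure the neighbourhood $V$ in the conclusion need not be the same as the arbitrary neighbourhood $U$ in the recurrence condition; the statement already reflects this by using separate letters, and Theorem \ref{Th 1.8} provides exactly one distinguished $V$ together with the ``for every neighbourhood $U$'' recurrence clause, so the match is clean.
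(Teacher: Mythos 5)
Your proposal is correct and follows essentially the same route as the paper: invoke Lemma \ref{CR=00003DK} to identify $\overline{\mathcal{K}} = CR^{0}(S)$ and $L(\mathcal{K})$ with the CR-sets near zero, then apply Theorem \ref{Th 1.8} and translate joint $\mathcal{K}$-recurrence into the stated recurrence condition. Your write-up is in fact a slightly more detailed version of the paper's own one-line argument.
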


\begin{proof}
Let $\mathcal{K}=\left\{ B\subseteq S:S\setminus B\text{ is not a CR-set near zero}\right\} $.
By lemma (\ref{CR=00003DK}),

$CR^{0}\left(S\right)=\overline{\mathcal{K}}$ and $L\left(\mathcal{K}\right)=\left\{ A\subseteq S:A\text{ is a CR-set near zero}\right\} $
then we apply theorem (\ref{Th 1.8}), we get our desire result.
\end{proof}
In \cite[Theorem 2.3]{key-7} De, Debnath and Goswami deduced the
combinatorial characterization of essential $\mathcal{F}$-sets. Here
we like to characterise the essential CR-set near zero combinatorially.
For this we need to recall the following definitions.

Let $\omega$ be the first infinite ordinal and each ordinal indicates
the set of all it\textquoteright s predecessor. In particular, $0=\emptyset$,
for each $n\in\mathbb{N}$, $n=\left\{ 0,1,\ldots,n-1\right\} $.
\begin{defn}
\cite[Definition 2.5]{key-13-1} 1. If $f$ is a function and $dom\left(f\right)=n\in\omega$,
then for all $x$, $f^{\frown}x=f\cup\left\{ \left(n,x\right)\right\} $.

2. Let $T$ be a set functions whose domains are members of $\omega$.
For each $f\in T$ , $B_{f}\left(T\right)=\left\{ x:f^{\frown}x\in T\right\} $.
\end{defn}

The following is the key lemma for our characterization.
\begin{lem}
\label{For Eli. Cha.}\cite[Lemma 2.6]{key-13-1} Let $p\in\beta S$.
Then $p$ is an idempotent if and only if for each $A\in p$ there
is a non-empty set $T$ of functions such that 
\begin{enumerate}
\item For all $f\in T$, $dom\left(f\right)\in\omega$, and $range\left(f\right)\subseteq A$.
\item For all $f\in T$, $B_{f}\left(T\right)\in p$.
\item For all $f\in T$ and any $x\in B_{f}\left(T\right)$, $B_{f^{\frown}x}\left(T\right)\subseteq x^{-1}B_{f}\left(T\right)$.
\end{enumerate}
\end{lem}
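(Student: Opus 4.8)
The plan is to prove the two implications separately. The ``if'' direction is a short direct computation with the ultrafilter product; the ``only if'' direction is a level-by-level recursion that builds $T$, and its only delicate point is choosing the right invariant to propagate.

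\textbf{The ``if'' direction.} Assume that for every $A\in p$ a set $T$ as described exists; I would deduce $p\cdot p=p$. Since $p$ and $p\cdot p$ are both ultrafilters it suffices to show $p\subseteq p\cdot p$, i.e.\ that $\{x\in S:x^{-1}A\in p\}\in p$ for each $A\in p$. Fix $A\in p$ and take the corresponding $T$; as $T\neq\emptyset$, choose some $f\in T$, so by (2) $B_f(T)\in p$. If $x\in B_f(T)$ then $f^\frown x\in T$, so by (1) every value of $f^\frown x$ --- in particular $x$ --- lies in $A$; hence $B_f(T)\subseteq A$. Also, by (2) and (3), $B_{f^\frown x}(T)\in p$ and $B_{f^\frown x}(T)\subseteq x^{-1}B_f(T)$, so $x^{-1}B_f(T)\in p$, and therefore $x^{-1}A\supseteq x^{-1}B_f(T)$ lies in $p$ too. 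Thus $B_f(T)\subseteq\{x\in S:x^{-1}A\in p\}$, and since $B_f(T)\in p$ the claim follows.

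\textbf{The ``only if'' direction.} Assume $p\cdot p=p$ and fix $A\in p$. The tool I would use is the standard fact (\cite[Lemma 4.14]{key-13}): writing $A^{\star}=\{x\in A:x^{-1}A\in p\}$, one has $A^{\star}\in p$, and $x^{-1}A^{\star}\in p$ for every $x\in A^{\star}$ --- both immediate from applying $p\cdot p=p$ first to $A$ and then to each $x^{-1}A$. Using this I would define, by recursion on the length of $f$, sets $C_f$ enjoying the invariant
\[
(\mathrm{P})\qquad C_f\in p,\qquad C_f\subseteq A,\qquad y^{-1}C_f\in p\ \text{for every}\ y\in C_f,
\]
by setting $C_\emptyset=A^{\star}$ and $C_{f^\frown x}=x^{-1}C_f\cap C_f$ for $x\in C_f$. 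One checks that (P) persists: $x^{-1}C_f\in p$ by (P) for $C_f$ (since $x\in C_f$), so $C_{f^\frown x}\in p$; $C_{f^\frown x}\subseteq C_f\subseteq A$; and for $y\in C_{f^\frown x}$ one has $y\in C_f$ and $xy\in C_f$, whence $y^{-1}C_{f^\frown x}=y^{-1}C_f\cap (xy)^{-1}C_f\in p$. Now let $T$ be the smallest set of finite functions with $\emptyset\in T$ such that $f^\frown x\in T$ whenever $f\in T$ and $x\in C_f$; then $T\neq\emptyset$ and $B_f(T)=C_f$ for every $f\in T$. Properties (1)--(3) then read off directly: (1) because each value of a function $f\in T$ lies in one of the sets $C_g$, all contained in $A$; (2) because $B_f(T)=C_f\in p$; and (3) because $B_{f^\frown x}(T)=x^{-1}C_f\cap C_f\subseteq x^{-1}C_f=x^{-1}B_f(T)$.

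\textbf{Main obstacle.} There is no serious difficulty here, only a bookkeeping choice: one must pick an invariant strong enough to keep the recursion alive. Taking $C_{f^\frown x}=x^{-1}A^{\star}$ is not enough, since the ``$\star$-closure'' that powers each step is lost; the fix is to re-close at every level by intersecting with $C_f$ itself and to carry (P) as part of the induction hypothesis. The one point that genuinely uses the construction is the observation that $y\in C_{f^\frown x}$ forces $xy\in C_f$, which is exactly what lets (P) survive the successor step.
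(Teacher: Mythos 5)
Your proof is correct: the ``if'' direction via $p\subseteq p\cdot p$ and the ``only if'' direction via the starred sets $A^{\star}$, the nested sets $C_{f^{\frown}x}=x^{-1}C_f\cap C_f$, and the tree $T$ with $B_f(T)=C_f$ all check out. The paper itself gives no proof of this lemma (it is quoted from Hindman--Strauss \cite[Lemma 2.6]{key-13-1}), and your argument is essentially the standard one from that source, built on \cite[Lemma 4.14]{key-13}.
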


The following theorem is the characterization of essential CR-sets
near zero.
\begin{thm}
\label{Elimentary characterization} Let $S$ be a dense subsemigroup
of $\left(\left(0,\infty\right),+\right)$and $A\subseteq S$. Then
the followings are equivalent.
\begin{enumerate}
\item $A$ is an essential CR-set near zero.
\item There is a non empty set $T$ of functions such that
\begin{enumerate}
\item For all $f\in T$, $dom\left(f\right)\in\omega$ and $rang\left(f\right)\subseteq A$. 
\item For all $f\in T$ and all $x\in B_{f}\left(T\right)$, $B_{f^{\frown}x}\subseteq x^{-1}B_{f}$.
\item For each $F\in P_{f}\left(T\right)$, $\bigcap_{f\in F}B_{f}\left(T\right)$
is a CR-set near zero.
\end{enumerate}
\item There is a downward directed family $\left\langle C_{F}\right\rangle _{F\in I}$
of subsets of $A$ such that
\begin{enumerate}
\item For each $F\in I$ and each $x\in C_{F}$ there exists $G\in I$ with
$C_{G}\subseteq x^{-1}C_{F}$ and
\item For all $\mathcal{F}\in P_{f}\left(I\right)$, $\bigcap_{F\in\mathcal{F}}C_{F}$
is a CR-set near zero.
\end{enumerate}
\end{enumerate}
\end{thm}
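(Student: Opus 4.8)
The plan is to prove the cycle of implications $(1)\Rightarrow(2)\Rightarrow(3)\Rightarrow(1)$, using Lemma~\ref{For Eli. Cha.} as the engine, much in the spirit of the corresponding characterizations of central sets and essential $\mathcal{F}$-sets. The key conceptual point is that $A$ is an essential CR-set near zero precisely when $A$ belongs to an idempotent of $CR^0(S)=\overline{\mathcal{K}}$ (Definition~\ref{def:ECR} and Lemma~\ref{CR=00003DK}), and membership in an idempotent of a \emph{sub}semigroup like $\overline{\mathcal{K}}$ is exactly what the extra condition ``$\bigcap_{f\in F}B_f(T)$ is a CR-set near zero'' encodes, since $L(\mathcal{K})$ is the collection of CR-sets near zero.

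For $(1)\Rightarrow(2)$: assume $p$ is an idempotent in $CR^0(S)$ with $A\in p$. Apply Lemma~\ref{For Eli. Cha.} to get a set $T$ of functions satisfying conditions (1)--(3) of that lemma. Conditions (a) and (b) of item~(2) are immediate from conditions (1) and (3) of Lemma~\ref{For Eli. Cha.}. For condition (c), let $F\in P_f(T)$; by condition (2) of Lemma~\ref{For Eli. Cha.}, each $B_f(T)\in p$, so $\bigcap_{f\in F}B_f(T)\in p$; since $p\in CR^0(S)$, every member of $p$ is a CR-set near zero, giving (c).

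For $(2)\Rightarrow(3)$: take $I=P_f(T)$, directed by $\supseteq$ (so a larger finite set of functions is a ``later'' index, making the family downward directed in the inclusion order on the $C_F$'s), and set $C_F=\bigcap_{f\in F}B_f(T)$. Each $C_F\subseteq A$ since ranges lie in $A$ — wait, more carefully: one should intersect with $A$ itself, i.e.\ set $C_F = A\cap\bigcap_{f\in F}B_f(T)$, which is still a CR-set near zero because $A\supseteq\bigcup\{\,\mathrm{range}(f):f\in T\,\}$ forces $A\in p$ in the relevant idempotent — actually the cleanest route is to note $C_F$ as defined is a CR-set near zero by (c), and then check that for $x\in C_F$, choosing $f\in F$ with $x\in B_f(T)$ and using (b), the index $G=F\cup\{f^{\frown}x\}$ gives $C_G\subseteq x^{-1}C_F$; the finite-intersection property of the family then gives condition (b) of item~(3) directly since the family is directed. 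The main bookkeeping obstacle here is getting the ranges inside $A$ honestly; I would handle it by replacing $C_F$ with $C_F\cap A$ throughout and observing this preserves being a CR-set near zero (intersection with a set containing all ranges, hence a member of the idempotent).

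For $(3)\Rightarrow(1)$: this is where the subsemigroup structure does the work. Let $\mathcal{B}=\{C_F:F\in I\}$. Since the family is downward directed, $\mathcal{B}$ is closed under finite intersections (for $F_1,\dots,F_n$ pick $G$ below all of them by directedness, then $C_G\subseteq\bigcap C_{F_i}$). By hypothesis~(b) every $C_F$ is a CR-set near zero, i.e.\ $\mathcal{B}\subseteq L(\mathcal{K})$ in the notation of Lemma~\ref{CR=00003DK}. By Theorem~\ref{Kappa}(2) there is $p\in\beta S$ with $\mathcal{B}\subseteq p\subseteq L(\mathcal{K})$; the latter inclusion means $p\in\overline{\mathcal{K}}=CR^0(S)$. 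It remains to show $p$ can be taken idempotent. Consider $E=\{\,q\in CR^0(S): \mathcal{B}\subseteq q\,\}$, or rather the set $\overline{C}=\bigcap_{F\in I}\overline{C_F}\cap CR^0(S)$; this is a nonempty (by the above) closed subset of the compact right topological semigroup $CR^0(S)$, and condition~(a) — for each $F$ and each $x\in C_F$ there is $G$ with $C_G\subseteq x^{-1}C_F$ — is exactly the statement that $\overline{C}$ is a subsemigroup: if $q,r\in\overline{C}$ and $F\in I$, then for $x\in C_F$ we have $x^{-1}C_F\supseteq C_G\in r$, so $\{x:x^{-1}C_F\in r\}\supseteq C_F\in q$, whence $C_F\in q+r$. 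By Ellis's theorem $\overline{C}$ contains an idempotent $p$, and since $A\supseteq C_F$ for every $F$ (as each $C_F\subseteq A$) we get $A\in p\in CR^0(S)$, so $A$ is an essential CR-set near zero.

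The main obstacle I anticipate is purely organizational rather than deep: making sure the ranges of functions in $T$ genuinely land inside $A$ and that the $C_F$'s are genuinely subsets of $A$, so that ``$A\in p$'' falls out at the end; this is resolved by the standard device of intersecting with $A$ and checking that this operation preserves the CR-near-zero property in the presence of an ambient idempotent containing $A$. Everything else is a faithful transcription of the central-sets argument with ``CR-set near zero'' in place of ``$\mathbb{N}$'' and $CR^0(S)=\overline{\mathcal{K}}$ in place of $\beta S$, legitimized by Lemmas~\ref{CR=00003DK} and~\ref{For Eli. Cha.} and Theorem~\ref{Kappa}.
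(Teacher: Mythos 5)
Your proposal is correct and follows essentially the same route as the paper: $(1)\Rightarrow(2)$ via Lemma~\ref{For Eli. Cha.}, $(2)\Rightarrow(3)$ with $C_{F}=\bigcap_{f\in F}B_{f}\left(T\right)$ indexed by $I=P_{f}\left(T\right)$, and $(3)\Rightarrow(1)$ by producing $p$ with $\left\{ C_{F}:F\in I\right\} \subseteq p\subseteq L\left(\mathcal{K}\right)$ and applying Ellis's theorem to $\bigcap_{F\in I}\overline{C_{F}}\cap CR^{0}\left(S\right)$ (the paper cites Hindman--Strauss Theorem 4.20 for the subsemigroup property that you verify by hand). Your bookkeeping worry in $(2)\Rightarrow(3)$ is unnecessary (and your proposed fix via ``the ambient idempotent'' would be unavailable at that stage, since only (2) is assumed there): any $x\in B_{f}\left(T\right)$ satisfies $f^{\frown}x\in T$, so $x$ lies in the range of $f^{\frown}x$, hence in $A$, and thus $C_{F}\subseteq A$ automatically, with no need to intersect with $A$.
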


\begin{proof}
(1) \ensuremath{\Rightarrow} (2) As $A$ be an essential CR-set near
zero, then there exists an idempotent $p\in CR^{0}\left(S\right)$
such that $A\in p$. Pick a set $T$ of functions as guaranteed by
Lemma (\ref{For Eli. Cha.}). Conclusions (a) and (b) hold directly.
Given $F\in P_{f}\left(T\right)$, $B_{f}\in p$ for all $f\in F$,
hence $\bigcap_{f\in F}B_{f}\left(T\right)\in p$ and so $\bigcap_{f\in F}B_{f}\left(T\right)$
is a CR-set near zero.

(2) \ensuremath{\Rightarrow} (3) Let $T$ be guaranteed by (2). Let
$I=P_{f}\left(T\right)$ For each $F\in I$, let $C_{F}=\bigcap_{f\in F}B_{f}\left(T\right)$
. Then directly each $C_{F}$ is a CR-set near zero. Given $\mathcal{F}\in P_{f}\left(I\right)$,
if $G=\cup\mathcal{F}$, then $\bigcap_{F\in\mathcal{F}}C_{F}=C_{G}$
and is therefore a CR-set near zero. To verify (a), let $F\in I$
and let $x\in C_{F}$ . Let $G=\left\{ f^{\frown}x:f\in F\right\} $.
For each $f\in F$, $B_{f^{\frown}x}\subseteq x^{-1}B_{f}$ and so
$C_{G}\subseteq x^{-1}C_{F}$.

(3) \ensuremath{\Rightarrow} (1) Let $\left\langle C_{F}\right\rangle _{F\in I}$
is guaranteed by (3). Let $M=\bigcap_{F\in I}\overline{C_{F}}$ .
By {[}12, Theorem 4.20{]}, $M$ is a subsemigroup of $\beta S$. By
{[}12, Theorem 3.11{]} there is some $p\in\beta S$ such that $\left\{ C_{F}:F\in I\right\} \subseteq p\subseteq\mathcal{F}$.
Therefore $M\cap CR^{0}\left(S\right)\neq\emptyset$; and so $M\cap CR^{0}\left(S\right)$
is a compact subsemigroup of $\beta S$. Thus there is an idempotent
$p\in M\cap CR^{0}\left(S\right)$, and so $A$ is an essential CR-set
near zero.
\end{proof}

\section{Cartesian product of essential cr-sets}

The cartesian product of two central sets is a central set; the cartesian
product of two J-sets is a J-set; and the cartesian product of two
C-sets is a C-set. In \cite{key-5} Debnath demonstrated that the
product of two C-sets is a C-set by examining the dynamical characterization
of C-sets. Recently, Goswami has demonstrated that the product of
two CR-sets is also a CR-set in \cite{key-11}. We are looking into
whether the same result is true for CR-sets near zero and essential
CR-sets near zero.

We are now denote some useful notations in the following.

Let $S$ and $T$ be dense subsemigroups of $\left(\left(0,\infty\right),+\right)$.
Then 
\[
\mathcal{D}_{0}=\left\{ \begin{array}{c}
\left(z_{n}=\left(x_{n},y_{n}\right)\right)_{n\in\mathbb{N}}\in\left(S\times T\right)^{\mathbb{N}}:\\
\text{ both }x_{n},y_{n}\text{ converges to zero under usual topology}
\end{array}\right\} ,
\]

\[
\mathcal{T}_{0}=\left\{ \left(x_{n}\right)_{n\in\mathbb{N}}\in S^{\mathbb{N}}:x_{n}\text{ converges to zero under usual topology}\right\} 
\]
 and 
\[
\mathcal{U}_{0}=\left\{ \left(y_{n}\right)_{n\in\mathbb{N}}\in T^{\mathbb{N}}:y_{n}\text{ converges to zero under usual topology}\right\} .
\]

In the following definition, we introduce the CR-sets near zero in
the cartetian product of two semigroups which are dense in $\left(\left(0,\infty\right),+\right)$.
\begin{defn}
Let $S$ and $T$ be a dense subsemigroup of $\left(\left(0,\infty\right),+\right)$
and $D\subseteq S\times T$ is called CR-set near zero in $S\times T$
if for $k\in\mathbb{N}$, $\delta>0$ there exists $r\in\mathbb{N}$,
for every $F\in\mathcal{P}_{f}\left(\mathcal{D}_{0}\right)$ with
$\mid F\mid\leq k$ there exist $\overline{a}=\left(a_{1},a_{2}\right)\in\left(S\times T\right)\bigcap\left(0,\delta\right)^{2}$
and $H\subseteq\left\{ 1,2,\ldots,r\right\} $ such that for all $f\in F$
\[
\overline{a}+\sum_{t\in H}f\left(t\right)\in D.
\]
\end{defn}

A set $A$ in $P_{f}\left(\mathbb{N}\right)$ is said to be an IP-set
if there is a sequence $\langle H_{n}\rangle_{n}$ in $P_{f}\left(\mathbb{N}\right)$
such that $\max H_{n}<\min H_{n+1}$ for each $n\in\mathbb{N}$ and
$A=FU\left(\langle H_{n}\rangle_{n}\right)=\left\{ \bigcup_{n\in K}H_{n}:K\in P_{f}\left(\mathbb{N}\right)\right\} $.
For every $r\in\mathbb{N}$, a set $B$ is called an $IP_{r}$ set
if there exists sequence $\langle H_{n}\rangle_{n=1}^{r}$ in $P_{f}\left(\mathbb{N}\right)$
such that $\max H_{n}<\min H_{n+1}$ for each $n\in\mathbb{N}$ and
\[
B=FU\left(\langle H_{n}\rangle_{n=1}^{r}\right)=\left\{ \bigcup_{n\in K}H_{n}:K\left(\neq\emptyset\right)\subseteq\left\{ 1,2,\ldots,r\right\} \right\} .
\]
 A set is called $IP^{*}$ (resp. $IP_{r}^{*}$ ) if that set intersects
with every $IP$ sets (resp. $IP_{r}$ sets).
\begin{lem}
\label{intersection} Let $S$ be a dense subsemigroups of $\left(\left(0,\infty\right),+\right)$
and $k\in\mathbb{N}$. Let $A$ be a CR-set near zero in $S$, let
$F\in P_{f}\left(\mathcal{T}_{0}\right)$ with $|F|\leq k$ and $\delta>0$.
Let $\Theta=\left\{ H\in P_{f}\left(\mathbb{N}\right):\left(\exists a\in S\cap\left(0,\delta\right)\right)\left(\forall f\in F\right)\left(a+\sum_{t\in H}f\left(t\right)\in A\right)\right\} .$
Let $r=r\left(A,k,\delta\right)$, and $\langle H_{n}\rangle_{n=1}^{r}$
be a sequence in $P_{f}\left(\mathbb{N}\right)$ such that $\max H_{n}<\min H_{n+1}$
for each $n\in\left\{ 1,2,\ldots,r-1\right\} $. Then there exists
$K\subseteq\left\{ 1,2,\ldots,r\right\} $ such that $\bigcup_{n\in K}H_{n}\in\Theta$.
In other words, for each $k\in\mathbb{N},$ $\Theta$ is an $IP_{r}^{*}$
set, where $r=r\left(A,k,\delta\right)$.
\end{lem}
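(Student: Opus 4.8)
The plan is to deduce the statement directly from the definition of a CR-set near zero by compressing each sequence of $F$ along the given blocks. Fix $k$ and $\delta$, and let $r=r(A,k,\delta)$ be the integer supplied by the hypothesis that $A$ is a CR-set near zero; recall that this $r$ is chosen before any family is presented, so it works uniformly for every $F\in P_f(\mathcal{T}_0)$ with $|F|\le k$. Now let $\langle H_n\rangle_{n=1}^r$ be a sequence in $P_f(\mathbb{N})$ with $\max H_n<\min H_{n+1}$ for each $n\in\{1,\dots,r-1\}$. What must be produced is a nonempty $K\subseteq\{1,\dots,r\}$ with $\bigcup_{n\in K}H_n\in\Theta$; since such unions are exactly the members of the $IP_r$-set $FU(\langle H_n\rangle_{n=1}^r)$, and since $\langle H_n\rangle_{n=1}^r$ was arbitrary, this is precisely the assertion that $\Theta$ is an $IP_r^*$ set.

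First I would replace each $f\in F$ by its \emph{block-sum} sequence: define $\tilde f\colon\mathbb{N}\to S$ by $\tilde f(n)=\sum_{t\in H_n}f(t)$ for $1\le n\le r$ and $\tilde f(n)=f(n)$ for $n>r$. Since $S$ is closed under addition, each $\tilde f(n)$ lies in $S$, and since the tail of $f$ converges to $0$, so does $\tilde f$; hence $\tilde f\in\mathcal{T}_0$. Put $\tilde F=\{\tilde f:f\in F\}\in P_f(\mathcal{T}_0)$, so $|\tilde F|\le|F|\le k$.

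Next, since $|\tilde F|\le k$, the defining property of $A$ — with the integer $r=r(A,k,\delta)$ already fixed — yields $c\in S\cap(0,\delta)$ and a nonempty $K\subseteq\{1,\dots,r\}$ such that $c+\sum_{n\in K}\tilde f(n)\in A$ for every $\tilde f\in\tilde F$. Set $a=c$ and $H=\bigcup_{n\in K}H_n\in P_f(\mathbb{N})$. Because the blocks $H_n$ with $n\in K$ are pairwise disjoint — this is exactly where the separation hypothesis $\max H_n<\min H_{n+1}$ is used —
\[
a+\sum_{t\in H}f(t)=c+\sum_{n\in K}\sum_{t\in H_n}f(t)=c+\sum_{n\in K}\tilde f(n)\in A
\]
for every $f\in F$. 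Thus $a\in S\cap(0,\delta)$ witnesses $H=\bigcup_{n\in K}H_n\in\Theta$, which is what was needed.

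I do not expect a substantive obstacle here; the two points that require a little care are (i) confirming that the compressed sequences $\tilde f$ are genuinely members of $\mathcal{T}_0$, which is why one must extend $\tilde f$ beyond index $r$ by a null sequence taking values in $S$, and (ii) using that the $r$ coming from the CR-near-zero condition depends only on $A,k,\delta$ and not on the particular family, so that it may legitimately be applied to $\tilde F$ rather than to $F$. The pairwise disjointness of the blocks, needed for the chain of equalities above, is furnished precisely by the condition $\max H_n<\min H_{n+1}$ imposed on $\langle H_n\rangle_{n=1}^r$.
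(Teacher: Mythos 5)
Your proposal is correct and follows essentially the same route as the paper: both compress each $f\in F$ into the block-sum sequence $n\mapsto\sum_{t\in H_n}f(t)$ (extended by $f$ itself beyond index $r$ so it stays in $\mathcal{T}_0$), apply the CR-near-zero property with the same $r=r(A,k,\delta)$ to this new family of at most $k$ sequences, and then use the disjointness of the blocks to unfold the resulting sum and conclude $\bigcup_{n\in K}H_n\in\Theta$. Your write-up is in fact a bit more careful than the paper's (which has notational slips such as writing $f(n)$ for $f(t)$ in the definition of $g_f$), but there is no substantive difference.
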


\begin{proof}
For each $n\in\left\{ 1,2,\ldots,r\right\} $, let $\alpha_{n}=|H_{n}|$
and write 
\[
H_{n}=\left\{ 1,2,\ldots,\alpha_{n}\right\} .
\]
For $f\in F$, define $g_{f}\in\mathcal{T}_{0}$ by,

\[
g_{f}\left(n\right)=\sum_{t\in H_{n}}f\left(n\right)\text{ if }n\in\left\{ 1,2,\ldots,r\right\} ,
\]
 and $g_{f}(n)=f\left(n\right)$ otherwise. Now $\{g_{f}:f\in F\}\in P_{f}\left(\mathcal{T}_{0}\right)$
with $|\{g_{f}:f\in F\}|\leq k.$ So pick $a\in S\cap\left(0,\delta\right)$
and $K\subseteq\left\{ 1,2,\ldots,r\right\} $ such that $a+\sum_{t\in K}g_{f}\left(t\right)=a+\sum_{t\in K}\sum_{n\in H_{t}}f\left(n\right)\in A$.
Therefore $\bigcup_{n\in K}H_{n}\in\Theta$.
\end{proof}
To prove the desired theorem we also need the following lemma.
\begin{lem}
\label{IP} Let $r,s\in\mathbb{N}$. Let $A$ and $B$ be $IP_{r}^{*}$
set (resp. $IP_{s}^{*}$ set) in $P_{f}\left(\mathbb{N}\right)$.
Then there exists $l=l\left(r,s\right)\in\mathbb{N}$ such that $A\cap B$
is an $IP_{l}^{*}$ set.
\end{lem}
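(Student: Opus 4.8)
The plan is to collapse the two parameters into one and then apply the finite, ordered-block version of Hindman's theorem for finite unions. The first step is to observe that being an $IP_n^*$ set is a \emph{weaker} property for larger $n$: if $n\le m$ and $D=FU(\langle H_1,\dots,H_m\rangle)$ is an $IP_m$ set, then $FU(\langle H_1,\dots,H_n\rangle)\subseteq D$ is an $IP_n$ set, so any $IP_n^*$ set already meets $D$; hence every $IP_n^*$ set is an $IP_m^*$ set. Consequently, writing $q=\max\{r,s\}$, both $A$ and $B$ are $IP_q^*$ sets, and it suffices to produce $l$ (depending only on $q$, hence only on $r$ and $s$) such that every $IP_l$ set meets $A\cap B$.

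The one genuinely Ramsey-theoretic ingredient is the finite unions theorem in ordered form: for every $q\in\mathbb N$ there is $l=l(q)\in\mathbb N$ such that whenever $\langle G_1,\dots,G_l\rangle$ is a sequence in $P_f(\mathbb N)$ with $\max G_n<\min G_{n+1}$ for $n<l$ and $FU(\langle G_1,\dots,G_l\rangle)$ is $2$-coloured, there exist nonempty $K_1,\dots,K_q\subseteq\{1,\dots,l\}$ with $\max K_m<\min K_{m+1}$ such that, setting $H_m=\bigcup_{n\in K_m}G_n$, the set $FU(\langle H_1,\dots,H_q\rangle)$ is monochromatic. I would derive this from the infinite Hindman theorem for $(P_f(\mathbb N),\cup)$ --- which produces an increasing sequence of blocks --- by a standard compactness (König's lemma) argument, or simply quote it. The point worth being careful about is that the blocks must come out genuinely \emph{ordered}, because an $IP_q$ set requires $\max H_m<\min H_{m+1}$ and a merely pairwise disjoint family of generators need not be re-orderable into such a sequence; this is exactly why a bare Hales--Jewett/Graham--Rothschild subspace is not quite enough and one wants the union form.

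With $l=l(q)$ fixed, let $C=FU(\langle G_1,\dots,G_l\rangle)$ be an arbitrary $IP_l$ set. Colour $x\in C$ with colour $1$ if $x\in A$ and with colour $2$ otherwise, and apply the finite unions theorem to obtain a monochromatic $IP_q$ set $C'=FU(\langle H_1,\dots,H_q\rangle)\subseteq C$ (here $C'\subseteq C$ via the identification $\bigcup_{m\in L}H_m=\bigcup_{n\in\bigcup_{m\in L}K_m}G_n$). Since $A$ is $IP_q^*$, $A\cap C'\neq\emptyset$, so $C'$ cannot be colour $2$; hence $C'\subseteq A$. Because $q\ge s$, the set $C''=FU(\langle H_1,\dots,H_s\rangle)$ is an $IP_s$ set with $C''\subseteq C'\subseteq A$ and $C''\subseteq C'\subseteq C$. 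As $B$ is $IP_s^*$, choose $z\in B\cap C''$; then $z\in A\cap B\cap C$, so $(A\cap B)\cap C\neq\emptyset$. Since $C$ was an arbitrary $IP_l$ set, $A\cap B$ is an $IP_l^*$ set with $l=l(\max\{r,s\})$, as required. The only real obstacle is establishing the ordered finite unions theorem above; once it is in hand, the rest is bookkeeping.
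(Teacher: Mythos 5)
Your argument is correct, but it takes a different route from the paper: the paper does not prove this lemma at all, it simply cites Proposition 2.5 of Bergelson--Robertson, whereas you give a self-contained derivation. Your two reductions are sound: the monotonicity $IP_n^*\Rightarrow IP_m^*$ for $n\le m$ (because an $IP_m$ set contains the $IP_n$ set generated by its first $n$ blocks) legitimately collapses the problem to a single parameter $q=\max\{r,s\}$, and the colouring step is the standard ``partition-regular plus star-set'' argument: once $C'=FU\left(\left\langle H_{1},\dots,H_{q}\right\rangle \right)\subseteq C$ is monochromatic for the colouring by membership in $A$, the $IP_q^*$ property of $A$ forces $C'\subseteq A$, and then $B$ being $IP_s^*$ (with $s\le q$) picks a point of $B$ inside $FU\left(\left\langle H_{1},\dots,H_{s}\right\rangle \right)\subseteq A\cap C$. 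The one ingredient you defer --- the ordered (block) finite unions theorem with $l=l(q)$ for $2$ colours --- is indeed standard: the infinite unions theorem in Hindman--Strauss already produces a block sequence (and from any infinite pairwise disjoint family one can extract a block subsequence, since the minima are distinct and hence unbounded), and a routine compactness/K\"onig argument then yields the finite ordered version; working at the level of index sets $K_1,\dots,K_q\subseteq\{1,\dots,l\}$ and taking $H_m=\bigcup_{n\in K_m}G_n$ preserves the ordering because the $G_n$ are themselves ordered. So your proof is complete modulo a quotable classical theorem; what the paper's citation buys is brevity and an explicit bound-free statement from the literature, while your argument has the merit of making the combinatorial mechanism (and the provenance of $l$ as a finite-unions Ramsey number for $\max\{r,s\}$ and two colours) visible.
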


\begin{proof}
\cite[Proposition 2.5]{key-3}.
\end{proof}
\begin{thm}
\label{Product} Let $S$ and $T$ be a dense subsemigroups of $\left(\left(0,\infty\right),+\right)$.
Let $A$ be n CR-set near zero in $S$ and $B$ be a CR-set near zero
in $T$. Then $A\times B$ is CR-set near zero in $S\times T$.
\end{thm}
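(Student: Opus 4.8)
The plan is to reduce the problem to the one-dimensional situation coordinatewise and then glue the two coordinates together by means of the $IP_r^*$ machinery of Lemmas \ref{intersection} and \ref{IP}.

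First I would fix $k\in\mathbb{N}$ and $\delta>0$, set $r_1=r(A,k,\delta)$ and $r_2=r(B,k,\delta)$ (the integers witnessing that $A$ is a CR-set near zero in $S$ and $B$ is a CR-set near zero in $T$), and let $r=l(r_1,r_2)$ be the integer produced by Lemma \ref{IP}; the claim is that this $r$ works for $A\times B$ with the given $k,\delta$. Given $F\in\mathcal{P}_f(\mathcal{D}_0)$ with $\mid F\mid\le k$, write $f(n)=(x_n^f,y_n^f)$ for $f\in F$ and project: the families $F_1=\{(x_n^f)_{n\in\mathbb{N}}:f\in F\}$ and $F_2=\{(y_n^f)_{n\in\mathbb{N}}:f\in F\}$ satisfy $F_1\in\mathcal{P}_f(\mathcal{T}_0)$ and $F_2\in\mathcal{P}_f(\mathcal{U}_0)$, both of cardinality at most $k$ (this uses that a sequence in $\mathcal{D}_0$ has both coordinates converging to $0$).

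Next I would form the sets
\[
\Theta_A=\left\{H\in\mathcal{P}_f(\mathbb{N}):(\exists a\in S\cap(0,\delta))(\forall f\in F)\left(a+\textstyle\sum_{t\in H}x_t^f\in A\right)\right\}
\]
and the analogous $\Theta_B$ built from $B$, $T$ and the $y_t^f$. By Lemma \ref{intersection}, $\Theta_A$ is an $IP_{r_1}^*$ set and $\Theta_B$ is an $IP_{r_2}^*$ set, so by Lemma \ref{IP}, $\Theta_A\cap\Theta_B$ is an $IP_r^*$ set. The key step -- the only one that is not pure bookkeeping -- is to test this $IP_r^*$ set against the specific $IP_r$ set $FU(\langle\{n\}\rangle_{n=1}^r)=\{K:\emptyset\ne K\subseteq\{1,\dots,r\}\}$ generated by the singletons $\{1\},\dots,\{r\}$: since $\Theta_A\cap\Theta_B$ meets every $IP_r$ set, it contains some $H\subseteq\{1,\dots,r\}$. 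Finally I would pick witnesses $a_1\in S\cap(0,\delta)$ and $a_2\in T\cap(0,\delta)$ for $H\in\Theta_A$ and $H\in\Theta_B$ respectively, and set $\overline{a}=(a_1,a_2)\in(S\times T)\cap(0,\delta)^2$; then $\overline{a}+\sum_{t\in H}f(t)=(a_1+\sum_{t\in H}x_t^f,\;a_2+\sum_{t\in H}y_t^f)\in A\times B$ for every $f\in F$, which is exactly what is required.

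I do not expect a serious obstacle: the substance of the theorem is already packaged in Lemmas \ref{intersection} and \ref{IP}. The one point requiring a moment's thought is the passage from ``$\Theta_A\cap\Theta_B$ is $IP_r^*$'' to ``there is a single $H\subseteq\{1,\dots,r\}$ that works in both coordinates at once'', handled by the singleton-generated $IP_r$ set above. One should also keep in mind the mild dependency issue -- that $r$ is permitted to depend on $k$ and $\delta$ in the definition of a CR-set near zero -- so that setting $r=l\bigl(r(A,k,\delta),r(B,k,\delta)\bigr)$ is legitimate.
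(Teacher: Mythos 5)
Your proposal is correct and follows essentially the same route as the paper's own proof: project $F$ onto the two coordinates, apply Lemma \ref{intersection} to get that the two sets of good $H$'s are $IP_{r_1}^*$ and $IP_{r_2}^*$, intersect them via Lemma \ref{IP}, and test the result against the singleton-generated $IP_r$ set to extract one $H\subseteq\{1,\dots,r\}$ working in both coordinates simultaneously. If anything, your version is slightly more careful than the paper's in fixing $r=l\bigl(r(A,k,\delta),r(B,k,\delta)\bigr)$ before $F$ is given, which is what the definition of a CR-set near zero actually demands.
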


\begin{proof}
Let $A\subseteq S$ and $B\subseteq T$ be two CR-set near zero. Let
$k\in\mathbb{N}$ and $F\in\mathcal{P}_{f}\left(\mathcal{D}_{0}\right)$
with $|F|\leq k.$ Let $G=\{\pi_{1}\circ f:f\in F\}$ and $H=\{\pi_{2}\circ f:f\in F\}$.
Then $G\in\mathcal{P}_{f}\left(\mathcal{T}_{0}\right)$ and $H\in\mathcal{P}_{f}\left(\mathcal{U}_{0}\right)$
with $\mid G\mid=\mid H\mid\leq k.$ Now from Lemma \ref{intersection},
For every $\delta>0$ there exist $r,s\in\mathbb{N}$ such that 
\[
\Theta_{1}=\left\{ H\in P_{f}\left(\mathbb{N}\right):\left(\exists a\in S\cap\left(0,\delta\right)\right)\left(\forall f\in G\right)\left(a+\sum_{t\in H}f\left(t\right)\in A\right)\right\} 
\]
 and 
\[
\Theta_{2}=\left\{ H\in P_{f}\left(\mathbb{N}\right):\left(\exists b\in S\cap\left(0,\delta\right)\right)\left(\forall f\in H\right)\left(b+\sum_{t\in H}f\left(t\right)\in B\right)\right\} 
\]
 are $IP_{r}^{*}$ and $IP_{s}^{*}$ sets respectively. So $\Theta=\Theta_{1}\cap\Theta_{2}$
is an $IP_{l}^{*}$ set, where $l=l(r,s)$ is coming from Lemma \ref{IP}.
consequently $\Theta_{1}\cap\Theta_{2}\cap FU\left(\left\langle \left\{ i\right\} \right\rangle _{i=1}^{l}\right)\neq\emptyset$,
pick $H\in\Theta_{1}\cap\Theta_{2}\cap FU\left(\left\langle \left\{ i\right\} \right\rangle _{i=1}^{l}\right).$

Hence there exist $H\subseteq\left\{ 1,2,\ldots,l\right\} $, $\left(a,b\right)\in\left(S\times T\right)\cap\left(0,\delta\right)^{2}$
such that for all $f\in F$, 
\[
\left(a,b\right)+\sum_{t\in H}f\left(t\right)\in A\times B.
\]

This completes the proof.
\end{proof}
\begin{thm}
Let $S$ and $T$ be a dense subsemigroups of $\left(\left(0,\infty\right),+\right)$.
Let $A$ and $B$ be an essential CR-set near zero in $S$ and $T$
respectively. Then $A\times B$ is essential CR-set near zero in $S\times T$.
\end{thm}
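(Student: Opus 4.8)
The plan is to adapt the combinatorial characterization in Theorem~\ref{Elimentary characterization} together with the product theorem for CR-sets near zero (Theorem~\ref{Product}), essentially mimicking the standard argument showing that the Cartesian product of two central sets is central. First I would invoke Definition~\ref{def:ECR}: since $A$ is an essential CR-set near zero in $S$ there is an idempotent $p\in CR^{0}(S)$ with $A\in p$, and similarly an idempotent $q\in CR^{0}(T)$ with $B\in q$. The natural candidate for the idempotent witnessing that $A\times B$ is essential is $p\times q\in\beta(S\times T)$, the ultrafilter generated by the rectangles $P\times Q$ with $P\in p$, $Q\in q$; one checks it is idempotent in $\beta(S\times T)$ because $p$ and $q$ are, and $A\times B\in p\times q$ by construction.

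The key step, which I expect to be the main obstacle, is to verify that $p\times q$ actually lies in $CR^{0}(S\times T)$, i.e.\ that every member of $p\times q$ is a CR-set near zero in $S\times T$. Since every $C\in p\times q$ contains a rectangle $P\times Q$ with $P\in p$ and $Q\in q$, and since CR-sets near zero in $S\times T$ are closed under taking supersets (immediate from the definition), it suffices to show $P\times Q$ is a CR-set near zero in $S\times T$ whenever $P\in p$ and $Q\in q$. But $P\in p\in CR^{0}(S)$ forces $P$ to be a CR-set near zero in $S$, and likewise $Q$ is a CR-set near zero in $T$; so Theorem~\ref{Product} gives exactly that $P\times Q$ is a CR-set near zero in $S\times T$. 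Thus $p\times q\in CR^{0}(S\times T)$.

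It remains to confirm the bookkeeping details: that $p\times q\in 0^{+}(S\times T)$ (this follows because for every $\delta>0$ we have $(0,\delta)\in p$ and $(0,\delta)\in q$, hence $(0,\delta)^{2}\supseteq$ a rectangle in $p\times q$, and one uses that the topology near zero on $S\times T$ is generated by such squares), and that the map $(p,q)\mapsto p\times q$ indeed produces an idempotent of the product semigroup $0^{+}(S\times T)$ with its coordinatewise operation. Both are routine: idempotency of $p\times q$ reduces, via the rectangle description of the product-semigroup operation, to idempotency of $p$ and of $q$ separately. Having produced an idempotent $p\times q\in CR^{0}(S\times T)$ containing $A\times B$, Definition~\ref{def:ECR} (applied in $S\times T$) yields that $A\times B$ is an essential CR-set near zero in $S\times T$, completing the proof.

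An alternative, which avoids any discussion of the product ultrafilter, is to work directly with criterion (3) of Theorem~\ref{Elimentary characterization}: take downward directed families $\langle C_{F}\rangle_{F\in I}$ for $A$ and $\langle D_{G}\rangle_{G\in J}$ for $B$ as guaranteed there, index by $I\times J$, and set $E_{(F,G)}=C_{F}\times D_{G}\subseteq A\times B$. Downward directedness and property (a) of Theorem~\ref{Elimentary characterization}(3) are checked coordinatewise, while property (b)---that $\bigcap_{(F,G)\in\mathcal{H}}E_{(F,G)}=\big(\bigcap_{F}C_{F}\big)\times\big(\bigcap_{G}D_{G}\big)$ is a CR-set near zero in $S\times T$ for finite $\mathcal{H}$---is again precisely Theorem~\ref{Product} applied to the finite intersections, which are CR-sets near zero by (b) in the respective factors. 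Either route works; I would present the first since it is shorter given the machinery already developed.
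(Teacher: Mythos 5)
Your preferred (first) route breaks down at its very first step: there is no such thing as ``the ultrafilter generated by the rectangles $P\times Q$ with $P\in p,\ Q\in q$.'' That family generates only a filter, and when $p\in 0^{+}(S)$ and $q\in 0^{+}(T)$ this filter is never an ultrafilter: every $P\in p$ and $Q\in q$ has infimum $0$, so $P\times Q\subseteq\{(x,y):x<y\}$ would force $\sup P\le\inf Q=0$ and $P\times Q\subseteq\{(x,y):x\ge y\}$ would force $0=\inf P\ge\sup Q$, hence neither $\{(x,y)\in S\times T:x<y\}$ nor its complement contains a rectangle. Consequently the two facts you lean on both fail for any genuine ultrafilter $r$ extending the rectangle filter: (i) it is \emph{not} true that every member of $r$ contains a rectangle, so the reduction ``it suffices that $P\times Q$ be a CR-set near zero'' does not establish $r\in CR^{0}(S\times T)$; and (ii) idempotency does not ``reduce coordinatewise'' --- the canonical choice, the Fubini/tensor product $p\otimes q$, contains all rectangles but is in general \emph{not} idempotent even when $p$ and $q$ are. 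The argument can be repaired, but it takes more than bookkeeping: let $\tilde{\pi}:\beta(S\times T)\to\beta S\times\beta T$ be the continuous extension of the identity on $S\times T$; it is a surjective homomorphism onto the product right topological semigroup, and an ultrafilter $r$ contains all rectangles if and only if $\tilde{\pi}(r)=(p,q)$. By Theorem \ref{Kappa}(2), applied in $S\times T$ with $L(\mathcal{K})$ the family of CR-sets near zero of $S\times T$ (the rectangle family lies in $L(\mathcal{K})$ by Theorem \ref{Product} and is closed under finite intersections), the set $CR^{0}(S\times T)\cap\tilde{\pi}^{-1}[\{(p,q)\}]$ is nonempty; it is a compact subsemigroup, so Ellis' theorem gives an idempotent $r$ in it, and $A\times B=\pi_{1}^{-1}[A]\cap\pi_{2}^{-1}[B]\in r$ because $\tilde{\pi}(r)=(p,q)$. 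That is a correct ultrafilter proof, but it is not the one you wrote.

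Your fallback argument is fine and is in substance the paper's own proof: the paper simply invokes Theorem \ref{Elimentary characterization} together with Theorem \ref{Product}, which is exactly your construction $E_{(F,G)}=C_{F}\times D_{G}$ indexed by $I\times J$, with downward directedness and condition (a) checked coordinatewise and condition (b) supplied by Theorem \ref{Product}. So present that version, not the first. One caveat, which applies equally to the paper: Theorem \ref{Elimentary characterization} is stated for dense subsemigroups of $\left((0,\infty),+\right)$, whereas you need the implication (3) $\Rightarrow$ (1) in $S\times T$; this presupposes the $S\times T$ analogues of Lemmas \ref{PRCR near 0}, \ref{I} and \ref{CR=00003DK} (so that $CR^{0}(S\times T)$ is a compact subsemigroup of $0^{+}(S\times T)$ and ``essential CR-set near zero in $S\times T$'' is even defined). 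The proofs carry over verbatim, but a careful write-up should say so.
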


\begin{proof}
Using the elimentary characterization of essential CR-set near zero
(Theorem \ref{Elimentary characterization}) and the above result,
its easy to verify.
\end{proof}

\end{document}